\title{Topological full groups of line-like minimal group actions are amenable}
\author{N\'ora Gabriella Sz\H{o}ke\thanks{Institut Fourier, Universit{\'e} Grenoble Alpes, France. Email: nora.gabriella.szoke@gmail.com. Research supported by the Swiss National Science Foundation, Early Postdoc.Mobility fellowship no.~P2ELP2\_184531.}}
\date{}
\theoremstyle{plain}
\newtheorem{theorem}{Theorem}[section]
\newtheorem{lemma}[theorem]{Lemma}
\newtheorem{claim}[theorem]{Claim}
\newtheorem{corollary}[theorem]{Corollary}
\newtheorem{proposition}[theorem]{Proposition}
\theoremstyle{definition}
\newtheorem{definition}[theorem]{Definition}
\theoremstyle{remark}
\newtheorem{remark}[theorem]{Remark}
\newcommand{\eps}{\varepsilon}
\newcommand{\dd}{\mathrm{d}}
\newcommand{\Z}{\mathbb{Z}}
\newcommand{\N}{\mathbb{N}}
\newcommand{\PP}{\mathscr{P}}
\newcommand{\acts}{\curvearrowright}
\newcommand{\hooklongrightarrow}{\mathrel{\lhook\mkern -3.5mu\relbar\mkern -4.5mu \rightarrow }}
\begin{document}

\maketitle

\begin{abstract}
We consider a finitely generated group acting minimally on a compact space by homeomorphsims, and assume that the Schreier graph of at least one orbit is quasi-isometric to a line. We show that the topological full group of such an action is amenable.
\end{abstract}

\section{Introduction}

\noindent Consider a group $G$ and a compact Hausdorff topological space $\Sigma$. A group action $G\acts \Sigma$ by homeomorphisms is called \emph{minimal} if $\Sigma$ has no proper $G$-invariant closed subset. The \emph{topological full group} $[[G\acts \Sigma]]$ is the group of all homeomorphisms of $\Sigma$ that are piecewise given by the action of elements of $G$, where each piece is open in $\Sigma$.

The notion of topological full groups was first introduced for $\Z$-actions by Giordano, Putnam and Skau \cite{GPS99}. Among others, Matui and Nekrashevych investigated these groups (\cite{Mat06}, \cite{Mat11}, \cite{Mat15}, \cite{Nek17}). Their results show that the derived subgroup of the topological full group is often simple, and in many cases it is also finitely generated.

In their groundbreaking paper \cite{JM13}, Juschenko and Monod developed a strategy for proving the amenability of topological full groups. They show that the topological full group of a minimal Cantor $\Z$-action is amenable. Combined with the results of Matui, their paper provides the first examples of finitely generated infinite simple amenable groups. A natural question arises: how far can we extend their technique? Several directions were investigated in \cite{JdlS15}, \cite{JNdlS16}, \cite{JMBMdlS18}, and by the author of the present paper in \cite{Szo21}.

The goal of this paper is to further stretch the Juschenko-Monod result in a certain direction. Namely, we consider a minimal action of a finitely generated group such that there exists an orbit that is quasi-isometric to a line, and show that the topological full group of such an action is amenable. This is a generalization of Theorem A in \cite{Szo21}, where the group was virtually cyclic.

\begin{theorem}\label{thm:main}
Let $G$ be a finitely generated group acting minimally on a compact Hausdorff topological space $\Sigma$ by homeomorphisms. Assume that there exists a $G$-orbit $X\subseteq \Sigma$, such that the Schreier graph of the action of $G$ on $X$ is quasi-isometric to $\Z$. Then the topological full group $[[G\acts \Sigma]]$ is amenable.
\end{theorem}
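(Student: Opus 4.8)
The plan is to run the Juschenko--Monod amenability method, in the form adapted to topological full groups in \cite{Szo21}, invoking the hypothesis on $X$ only through recurrence of the simple random walk. Write $\Gamma=[[G\acts\Sigma]]$, fix a finite symmetric generating set $S$ of $G$, and let $\mathrm{Sch}$ be the corresponding Schreier graph of $G\acts X$, a connected bounded-degree graph quasi-isometric to $\Z$. First I would record the elementary structure: each $\tau\in\Gamma$ agrees locally with an element of $G$, hence preserves every $G$-orbit, so $\Gamma$ acts on $X$; by compactness of $\Sigma$ the finite clopen partition defining $\tau$ gives a uniform bound on $d_{\mathrm{Sch}}(x,\tau x)$, so $\Gamma$ acts on $X$ by permutations of bounded displacement (i.e.\ $\Gamma$ embeds into the wobbling group of $\mathrm{Sch}$), and faithfully, because $X$ is dense in $\Sigma$ by minimality. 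Since amenability is stable under directed unions, it suffices to treat a finitely generated $\Delta=\langle\tau_1,\dots,\tau_k\rangle\leq\Gamma$; I will moreover allow myself to enlarge $\Delta$ within $\Gamma$, e.g.\ to contain a fixed element of $G$ translating $X$ along a coarse $\Z$-coordinate. Let $R$ bound the displacements of the generators of $\Delta$.

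The geometry enters next. The orbital Schreier graph of $\Delta\acts X$ has vertex set $X$, degree $\le 2k$, and every edge joins points at $\mathrm{Sch}$-distance $\le R$, so a ball of radius $n$ in it lies in a ball of radius $Rn$ in $\mathrm{Sch}$ and hence contains at most linearly many vertices. A bounded-degree graph of at most linear growth has recurrent simple random walk --- a positive proportion of the spheres of radius $\le N$ about a base point have bounded size, producing infinitely many pairwise disjoint edge-cuts of bounded size separating that point from infinity, so the Nash--Williams criterion applies on each connected component. Thus $\Delta\acts X$ is recurrent in the sense of Juschenko--Nekrashevych--de la Salle, and so extensively amenable: the action of $\mathcal P_f(X)\rtimes\Delta$ on $\mathcal P_f(X)$ (by $\Delta$ on $X$ and by symmetric difference) has an invariant mean; equivalently $\bigl(\bigoplus_X A\bigr)\rtimes\Delta$ is amenable for every amenable $A$.

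The crucial step is to pass from extensive amenability of $\Delta\acts X$ to amenability of $\Delta$, following \cite{Szo21}. Since $X$ is two-ended, fix a partition $X=X^+\sqcup X^-$ into half-spaces. Boundedness of displacements forces $X^+\triangle\,\delta(X^+)$ to be finite for each $\delta\in\Delta$, so $\delta\mapsto X^+\triangle\,\delta(X^+)$ is a $1$-cocycle $\Delta\to\mathcal P_f(X)$; twisting the linear action $\Delta\acts\mathcal P_f(X)$ by it produces an affine action whose orbit of $\varnothing$ is the coset space $\Delta/\Delta_{X^+}$, where $\Delta_{X^+}$ is the stabiliser of the half-space $X^+$. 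The mean provided by extensive amenability, being invariant under $\Delta$ and under all translations, is invariant under this affine action; for a suitable choice of $X^+$ and of the (enlarged) $\Delta$ it restricts to the orbit of $\varnothing$, showing that $\Delta_{X^+}$ is co-amenable in $\Delta$. One is thus reduced to proving $\Delta_{X^+}$ amenable; its elements preserve each of the two half-spaces of $X$, so it embeds into the product of the groups it induces on $X^+$ and on $X^-$, and each of these is again a group of bounded-displacement permutations of a one-ended line-like set whose orbital graph is recurrent. One then closes the argument by iterating the co-amenability reduction within an appropriate class of such groups, organised as an induction over finitely generated subgroups so that it is not circular.

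I expect this last recursion --- amenability of the half-space stabilisers without circular reasoning --- to be the main obstacle. In the original Juschenko--Monod setting the Schreier graph is literally $\Z$ and the recursion stays within the class of minimal Cantor $\Z$-systems; in \cite{Szo21} the group $G$ is virtually cyclic, a property inherited by the relevant reductions. Here neither holds: $G$ need not be virtually cyclic, $X$ is only quasi-isometric to $\Z$, and ``end'', ``half-space'' and ``induced system'' must be handled coarsely. The real work is therefore to isolate a class of groups (or of pseudogroups on line-like sets) that contains $[[G\acts\Sigma]]$, is closed under passing to half-space stabilisers, and for which the recurrence estimate of the second step survives the bookkeeping --- which is exactly where minimality and quasi-isometry with $\Z$ get used together.
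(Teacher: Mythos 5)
Your proposal correctly reproduces the outer shell of the paper's argument: extensive amenability of the action on $X$ via recurrence of the orbital Schreier graphs (the paper gets this from Rayleigh monotonicity applied to subgraphs of a graph quasi-isometric to $\Z$, which is equivalent to your linear-growth/Nash--Williams route), the cocycle $\varphi\mapsto Y\triangle\varphi(Y)$ into $\mathscr{P}_f(X)$ for a half-space $Y=f^{-1}(\N)$, and the reduction of the whole theorem to amenability of the kernel of that cocycle, i.e.\ of the stabilizer $[[G\acts\Sigma]]_Y$. Up to that point you are on the paper's track.

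The genuine gap is the last step, and you have flagged it yourself: you do not prove that the half-space stabilizer is amenable. Your suggested recursion --- split $X$ into $X^+\sqcup X^-$, embed the stabilizer into the product of the groups it induces on each half, and iterate a co-amenability reduction --- does not obviously close: each half is one-ended, so there is no further finite-boundary partition to feed into a new cocycle, and the induced groups on $X^\pm$ are no longer topological full groups of minimal actions, so the class you would need to be closed under the reduction is not identified. The paper resolves this differently and non-recursively: it proves the stabilizer is \emph{locally finite}. The mechanism is a ubiquity-of-patterns lemma (minimality plus compactness imply that for any finite $F\subseteq[[G\acts\Sigma]]_Y$ and any radius $n$, every ball of a fixed radius $r$ in $X$ contains a point $z$ whose $S^n$-neighborhood carries the same $F$-action as the $S^n$-neighborhood of a base point $p$ near $\partial Y$). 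Transporting $Y$ along these repeated patterns, spaced out along the bi-infinite geodesic that $X$ contains, produces a nested sequence of $F$-invariant sets $\cdots\supseteq Y_{i}\supseteq Y_{i+1}\supseteq\cdots$ whose successive differences $Y_i\setminus Y_{i+1}$ partition $X$ and have uniformly bounded cardinality; hence $\langle F\rangle$ embeds into a direct product of symmetric groups of bounded size and is finite. This use of minimality to manufacture infinitely many invariant ``cuts'' is the main new content of the paper and is absent from your proposal, so as written the proof is incomplete exactly at the point you predicted it would be.
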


In order to illustrate the interest of our result, let us mention how to recover a result of Matte Bon about the Grigorchuk group. Let $G$ be the first Grigorchuk group (defined in \cite{Gri80}), which is usually defined as a transformation group of the binary rooted tree. Its action on the boundary of the tree - a Cantor set - is known to be minimal and its Schreier graphs are quasi-isometric to lines, as seen in \cite{GLN17}. Theorem \ref{thm:main} can be applied to deduce the following. 

\begin{corollary}[Matte Bon, \cite{MB15}]
The topological full group of the Grigorchuk group acting on the boundary of the rooted binary tree is amenable.
\end{corollary}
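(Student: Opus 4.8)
The plan is to deduce this directly from Theorem \ref{thm:main} by verifying its three hypotheses for the action of the first Grigorchuk group $G$ on the boundary $\partial T$ of the rooted binary tree $T$. First, $G$ is finitely generated: by definition it is generated by the four tree automorphisms $a,b,c,d$ of \cite{Gri80}. Second, $\partial T$, with the topology generated by the cylinder sets, is homeomorphic to the Cantor set, hence compact and Hausdorff, and every automorphism of $T$ induces a homeomorphism of $\partial T$; thus $G\acts\partial T$ is an action by homeomorphisms.

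Next I would record minimality. Since $G$ acts transitively on each level of $T$, the orbit of any infinite ray $\xi\in\partial T$ meets every cylinder set, so every orbit is dense in $\partial T$; a continuous action on a compact space that admits a dense orbit has no proper nonempty closed invariant subset, so $G\acts\partial T$ is minimal.

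Finally I would invoke the description of the orbital Schreier graphs of $G\acts\partial T$ from \cite{GLN17}: apart from the exceptional orbit of the constant ray $111\cdots$, whose Schreier graph is one-ended, the Schreier graph of every orbit --- taken with respect to $\{a,b,c,d\}$, or equivalently, up to quasi-isometry, with respect to any finite generating set --- is quasi-isometric to $\Z$. Picking any $\xi\in\partial T$ outside the exceptional orbit and putting $X=G\cdot\xi$, all the hypotheses of Theorem \ref{thm:main} are satisfied, and we conclude that $[[G\acts\partial T]]$ is amenable. Every ingredient being either elementary or quoted from the literature, there is no genuine obstacle; the only points of caution are to avoid the exceptional orbit when choosing $X$, together with the routine observations that the quasi-isometry class of an orbital Schreier graph does not depend on the chosen finite generating set and that ``quasi-isometric to $\Z$'' and ``quasi-isometric to a line'' amount to the same condition.
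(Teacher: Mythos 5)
Your proposal is correct and follows the same route as the paper: both deduce the corollary by verifying the hypotheses of Theorem \ref{thm:main} for $G\acts\partial T$, citing \cite{GLN17} for the fact that the orbital Schreier graphs are quasi-isometric to lines. Your extra care in excluding the exceptional one-ended orbit of $111\cdots$ is a welcome precision (the theorem only needs one suitable orbit, so this changes nothing).
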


This result was first proved by Matte Bon, who showed that the Grigorchuk group can be embedded in the topological full group of a minimal Cantor $\Z$-action (\cite{MB15}).

 There are more groups to which our Theorem \ref{thm:main} can be applied, for instance the groups defined by Nekrashevych in \cite{Nek18}. Let $a$ be an involution on a Cantor space $\Sigma$. We say that a finite group $A$ of homeomorphisms of $\Sigma$ is a \emph{fragmentation} of $a$ if for all $h\in A$ and all $x\in \Sigma$, we have $h(x)=x$ or $h(x)=a(x)$ and for every $x\in \Sigma$ there exists $h\in A$ such that $h(x)=a(x)$. In \cite{Nek18} it is shown that for a fragmentation $A, B$ of a minimal action of the dihedral group $D_{\infty}=\langle a, b\rangle$, the action of the topological full group $G=\langle A, B\rangle$ is minimal. It is not difficult to see that the associated Schreier graphs are quasi-isometric to lines. Therefore, we get the following result of Nekrashevych as a corollary of Theorem \ref{thm:main}.
 
\begin{corollary}[Nekrashevych, \cite{Nek18}]
For any fragmentation $A, B$ of a minimal action of the dihedral group $D_{\infty}=\langle a, b\rangle$, the topological full group of $G=\langle A, B\rangle$ is amenable.
\end{corollary}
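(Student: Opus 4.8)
The plan is to follow the strategy of Juschenko and Monod \cite{JM13} — as refined and axiomatised in terms of extensive amenability in \cite{JMBMdlS18}, and already used for virtually cyclic $G$ in \cite{Szo21} — transporting it from the line $\Z$ to the quasi-line $X$. Since amenability is a local property it suffices to show that every finitely generated subgroup $\Lambda\leq[[G\acts\Sigma]]$ is amenable, so fix such a $\Lambda$. It acts on $\Sigma$ by homeomorphisms preserving each $G$-orbit, hence acts on $X$, and the whole argument will analyse this action $\Lambda\acts X$.

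The first thing I would record are two elementary features of $\Lambda\acts X$. \emph{Faithfulness:} by minimality $X$ is dense in $\Sigma$, so an element of $\Lambda$ fixing $X$ pointwise fixes $\Sigma$ pointwise, whence $\Lambda\hookrightarrow\mathrm{Sym}(X)$. \emph{Bounded displacement:} the finitely many generators of $\Lambda$ are, as homeomorphisms of $\Sigma$, finite patchworks of elements of $G$ along a clopen partition, so there is a finite $F\subseteq G$ such that each generator moves every point of $X$ along an edge-path of bounded length in the $F$-Schreier graph; consequently every element of $\Lambda$ moves every point of $X$ a bounded Schreier distance. As this Schreier metric is quasi-isometric to $\Z$, after fixing a quasi-isometry $c\colon X\to\Z$ the group $\Lambda$ acts on $X$ by permutations of bounded $c$-displacement, and every $\Lambda$-orbit inside $X$ — being a subset of a quasi-line, hence of linear growth and bounded degree — has recurrent Schreier graph.

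The recurrence just established yields, by the criterion of \cite{JMBMdlS18}, that the action $\Lambda\acts X$ is extensively amenable; unwinding the definition, for every $1$-cocycle $\beta\colon\Lambda\to\bigoplus_X\Z/2\Z$ over the permutation action of $\Lambda$ on $\bigoplus_X\Z/2\Z\cong\mathcal{P}_{\mathrm{f}}(X)$, the corresponding affine action of $\Lambda$ on $\mathcal{P}_{\mathrm{f}}(X)$ admits a $\Lambda$-invariant mean. Now the Juschenko--Monod endgame. Using $c$, split $X=X_-\sqcup X_+$ with $X_+=c^{-1}(\Z_{\geq0})$; since the Schreier graph has bounded geometry, $c$-preimages of finite sets are finite, so every bounded $c$-displacement permutation $\sigma$ satisfies $\sigma X_+\,\triangle\,X_+\in\mathcal{P}_{\mathrm{f}}(X)$. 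Hence $\Lambda$ acts on the $\mathcal{P}_{\mathrm{f}}(X)$-torsor $\Omega=\{A\subseteq X:A\,\triangle\,X_+\text{ finite}\}$, this action being affine through $\mathcal{P}_{\mathrm{f}}(X)\rtimes\Lambda$, so extensive amenability supplies a $\Lambda$-invariant mean on $\Omega$. On the other hand the stabiliser of $X_+$ preserves both $X_+$ and $X_-$, hence embeds into a product of two ``wobbling groups'', one for each of the two one-ended quasi-rays $X_\pm$; these wobbling groups of quasi-rays are amenable by a parallel — but more delicate — instance of the same argument. Feeding this into the standard fact that a group acting on a set with an invariant mean and with amenable point-stabilisers is itself amenable, we conclude that $\Lambda$, hence $[[G\acts\Sigma]]$, is amenable.

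I expect this last step to be the main obstacle. The hypothesis that the Schreier graph is quasi-isometric to $\Z$ — rather than merely recurrent — is used precisely here: two-endedness is what lets one cut the relevant wobbling group along a finite separator into pieces attached to one-ended quasi-rays, whereas a one-ended or higher-dimensional recurrent graph offers no such splitting. Organising this reduction in the coarse (quasi-line) setting — in particular the treatment of the quasi-ray case, which does not visibly reduce to anything simpler, together with the verification that the extensive-amenability mean is ``concentrated at infinity'' — is where the technical weight lies, exactly as in the passage from $\Z$ to the half-line in \cite{JM13,Szo21}.
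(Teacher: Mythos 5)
You have not actually proved the corollary: your text is a sketch of Theorem \ref{thm:main}, while the entire content of the corollary is the verification that the Nekrashevych groups satisfy its hypotheses, which you never address. Concretely, one must know that the action of $G=\langle A,B\rangle$ on the Cantor space is minimal (this is a theorem of \cite{Nek18}; it does not follow formally from minimality of the $D_\infty$-action), and that the Schreier graphs of its orbits are quasi-isometric to lines. The latter holds because every $h\in A$ satisfies $h(x)\in\{x,a(x)\}$ and every point is moved by some $h\in A$ (likewise for $B$), so the $\langle A,B\rangle$-orbits coincide with the $D_\infty$-orbits and the Schreier graph of $\langle A,B\rangle$ differs from the line-shaped Schreier graph of $D_\infty$ only by loops and parallel edges. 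None of this appears in your write-up.

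More seriously, the sketch of the general theorem that you do give breaks down at exactly the point you flag as ``the main obstacle''. You propose to prove amenability of the stabilizer of $X_+=c^{-1}(\Z_{\geq 0})$ by embedding it into a product of wobbling groups of the two quasi-rays and asserting that these wobbling groups are amenable ``by a parallel instance of the same argument''. They are not: the group of bounded-displacement permutations of $\N$ (or of $\Z$) contains nonabelian free subgroups (van Douwen), so no argument that sees only the coarse geometry of the quasi-rays can close this step. The paper's route is different in kind: it uses minimality and compactness to show (Lemma \ref{le:upp}) that any finite pattern of the action of a finite set $F$ of stabilizer elements near a basepoint recurs within uniformly bounded distance of every point of $X$; planting these recurrences at well-separated points along a bi-infinite geodesic yields (Lemma \ref{le:stab}) a nested chain of $F$-invariant sets $\cdots\supseteq Y_i\supseteq Y_{i+1}\supseteq\cdots$ whose successive differences are $F$-invariant finite sets of uniformly bounded size, so $\langle F\rangle$ embeds into a product of symmetric groups of bounded degree and is finite. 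Hence the kernel of the cocycle is locally finite (Proposition \ref{prop:stab_loc_fin}); its amenability is a dynamical consequence of minimality, not a coarse-geometric one, and your proposal offers no substitute for this step.
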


\vspace{10pt}

\noindent{\bf Acknowledgements.} It was Nicol\'as Matte Bon who asked me whether a similar result in my thesis could be true in this more general setting, I would like to thank him for this question. Furthermore, I am very grateful to Fran\c{c}ois Dahmani for our numerous discussions and his comments on a previous version of the paper.

\section{Preliminaries}

\subsection{Quasi-isometry}

If $\mathcal{G}=(\mathrm{V}(\mathcal{G}), \mathrm{E}(\mathcal{G}))$ is a connected graph, then we can think of $\mathcal{G}$ as a metric space. The distance $\mathrm{d}:\mathrm{V}(\mathcal{G})\times \mathrm{V}(\mathcal{G})\rightarrow \mathbb{N}$ is defined to be the length of the shortest path between two vertices.

Let $\mathcal{G}_1$, $\mathcal{G}_2$ be two connected graphs with distance functions $\mathrm{d}_1,\mathrm{d}_2$ respectively. Recall that the map $f\colon \mathrm{V}(\mathcal{G}_1)\rightarrow \mathrm{V}(\mathcal{G}_2)$ is a \emph{quasi-isometry} if there exist constants $\alpha\geq 1$, $\beta\geq 0$ and $\gamma\geq 0$ such that the following two properties hold.
\begin{enumerate}
\item For all $u,v\in \mathrm{V}(\mathcal{G}_1)$ we have
\[ \alpha^{-1}\  \mathrm{d}_1(u,v)-\beta\leq \mathrm{d}_2(f(u),f(v)) \leq \alpha\ \mathrm{d}_1(u,v)+\beta.\]
\item For every $w\in \mathrm{V}(\mathcal{G}_2)$ there exists $u\in \mathrm{V}(\mathcal{G}_1)$ such that $\mathrm{d}_2(f(u),w)\leq \gamma$.
\end{enumerate}
Two graphs are \emph{quasi-isometric} if there exists a quasi-isometry between them.

\subsection{Group actions and graphs}

As a convention, throughout the paper we always consider groups acting from the left.

Let $G$ be a group acting on a set $X$. The \emph{piecewise group} $\mathrm{PW}(G\acts X)$ of the action is defined as follows. A bijection $\varphi\colon X \to X$ is a \emph{piecewise $G$ map}, i.e., an element of the piecewise group, iff there exists a finite subset $S\subset G$ such that $\varphi(x)\in S\cdot x$ for every $x\in X$. In other words, we cut the space $X$ into finitely many pieces, and act on each of them with a group element. It is clear that the piecewise $G$ maps form a group.

If the group $G$ acts on a compact space by homeomorphisms, then the topological full group of this action is always a subgroup of its piecewise group. Indeed, by the compactness of the space a partition into open subsets is necessarily finite.

\vspace{3mm}

Let $G$ be a finitely generated group with a symmetric generating set $S$, and assume that $G$ acts on a set $X$. Recall that the \emph{Schreier graph} of this action $\mathrm{Sch}(G, X, S)$ is defined to be the graph with vertex set $X$ and edge set $\{ (x, s x) : x\in X, s\in S\}$. Sometimes it is also called the \emph{graph of the action} $G\acts X$.

Note that the Cayley graph of $G$ is the Schreier graph of its action on itself by (left) multiplication.

\begin{definition}
If $G$ is a finitely generated group with a fixed symmetric generating set $S$, then the \emph{length} of a group element $g\in G$ is defined as
\[ \mathrm{len}(g) = \min \{n\in \N : g = s_1s_2\dots s_n \text{ with } s_1, s_2, \dots, s_n\in S \}. \]
\end{definition}

In other words, the length of an element is its distance from the identity element in the Cayley graph.

\begin{definition}
For any graph $\mathcal{G}=(V, E)$ with distance function $\dd$ and a number $n\in\N$ we define the \emph{$n$-ball} around a point $p\in V$ to be
\[ B_n(p) = \{ q\in V : \dd(p,q)\leq n \}.\]
For a set $W\in V$, the \emph{$n$-neighborhood of $W$} is
\[ \Gamma_n(W) = \{ q\in V : \dd(q, W)\leq n\}.\]
If a group $G$ acts on the graph $\mathcal{G}$, then for a set of elements $D\subseteq G$ the \emph{$D$-neighborhood of a point} $p\in V$ is $D\cdot p = \{ d\cdot p : d\in D\}$, and the \emph{$D$-neighborhood of a set} $W\subseteq V$ is 
\[ D\cdot W = \bigcup_{q\in W} D\cdot q.\]
\end{definition}

Keep in mind that when $\mathcal{G}$ is a Schreier graph of a $G$-action, and $G=\langle S\rangle$, then the $n$-ball around a point is exactly the $S^n$-neighborhood of that point and the $n$-neighborhood of a set is equal to its $S^n$-neighborhood.

\subsection{Extensive amenability}

A group action $G\acts X$ is amenable if there exists a $G$-invariant mean on $X$. In the proof of our result we will use a stronger property, the extensive amenability of an action.

\begin{definition}
For a set $X$, let us denote the set of all finite subsets of $X$ by $\mathscr{P}_f(X)$. Note that this set becomes an abelian group with the symmetric difference. If a group $G$ acts on $X$, it gives rise to a $G$-action on $\mathscr{P}_f(X)$.

We say that the action $G\acts X$ is \emph{extensively amenable} if there exists a $G$-invariant mean on $\mathscr{P}_f(X)$ that gives full weight to the collection of sets containing any given finite subset of $X$.
\end{definition}

Extensively amenable actions were first used (without a name) in \cite{JM13}. The name was given in \cite{JMBMdlS18} and this concept turned out to be very useful for proving the amenability of topological full groups, see \cite{JM13}, \cite{JNdlS16}, \cite{JMBMdlS18}, or \cite{Szo21}. For a detailed introduction to extensive amenability, see Chapter 11 of \cite{BR18}. The following two statements about extensive amenability will be among the core ingredients in our proof.

\begin{proposition}[Proposition 3.6 in \cite{Szo21}]\label{prop:recurrent_orbits}
Let $G$ be a group acting on a set $X$. Assume that for all finitely generated subgroups $H\leq G$ and all $H$-orbits $Y\subseteq X$ the Schreier graph of the action $H\acts Y$ is recurrent. Then the action of the piecewise group $\mathrm{PW}(G\acts X)$ on $X$ is extensively amenable.
\end{proposition}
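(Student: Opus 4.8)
The plan is to deduce this from the principle that \emph{recurrence of orbit Schreier graphs yields extensive amenability} (as developed in \cite{JM13} and \cite{JNdlS16}), but with two twists that have to be handled: the acting group is the piecewise group $\mathrm{PW}:=\mathrm{PW}(G\acts X)$, which is typically \emph{not} finitely generated, and the recurrence hypothesis is only assumed for finitely generated subgroups of $G$, not of $\mathrm{PW}$. Note at the outset that every element of $\mathrm{PW}$ moves each point inside its own $G$-orbit, so the orbits relevant below are exactly the $G$-orbits, whose Schreier graphs are the objects the hypothesis speaks about.

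\textbf{Reduction to finitely generated subgroups of $G$.} First I would use that extensive amenability is a ``local'' property (as in \cite{JMBMdlS18}): for a finitely generated $H\le\mathrm{PW}$, the set of means on $\mathscr{P}_f(X)$ that are $H$-invariant and give full weight to every up-set $\{A\in\mathscr{P}_f(X):F\subseteq A\}$, $F$ finite, is weak-$*$ closed in the compact space of means, and these sets form a downward-directed family as $H$ varies (two finitely generated subgroups generate a finitely generated one); by the finite intersection property it therefore suffices to prove that $H\acts X$ is extensively amenable for each finitely generated $H\le\mathrm{PW}$. Next, if $H=\langle\varphi_1,\dots,\varphi_k\rangle$ and $S_i\subseteq G$ is finite with $\varphi_i(x)\in S_i x$ for all $x$, then putting $S=\bigl(\bigcup_i S_i\bigr)\cup\bigl(\bigcup_i S_i\bigr)^{-1}\cup\{e\}$ and $G_0=\langle S\rangle$, a direct check shows that compositions and inverses of piecewise $G_0$-maps are again piecewise $G_0$-maps, so $H\le\mathrm{PW}(G_0\acts X)$ with $G_0$ finitely generated. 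Since extensive amenability passes to subgroups, it is enough to prove: \emph{if $G_0$ is a finitely generated subgroup of $G$, then $\mathrm{PW}(G_0\acts X)\acts X$ is extensively amenable.}

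\textbf{The core step.} Fix such a $G_0$ with symmetric generating set $S$, and for each $G_0$-orbit $Y\subseteq X$ let $\mathcal{G}_Y=\mathrm{Sch}(G_0,Y,S)$; by hypothesis each $\mathcal{G}_Y$ is recurrent. I would establish extensive amenability of $\mathrm{PW}(G_0\acts X)\acts X$ through the Reiter-type reformulation: it is equivalent to amenability of the action of $\mathscr{P}_f(X)\rtimes\mathrm{PW}(G_0\acts X)$ on $\mathscr{P}_f(X)$ (with $\mathscr{P}_f(X)$ acting on itself by symmetric difference), that is, to producing for each finite $K\subseteq\mathscr{P}_f(X)\rtimes\mathrm{PW}(G_0\acts X)$ and each $\eps>0$ a finitely supported probability measure $\mu$ on $\mathscr{P}_f(X)$ with $\|w_*\mu-\mu\|_1<\eps$ for all $w\in K$. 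The crucial observation is that, although $\mathrm{PW}(G_0\acts X)$ has no Schreier graph of its own, \emph{every finite subset of it acts with uniformly bounded jumps}: there is an $R$ such that every piecewise map occurring in $K$ sends each $x$ into the $R$-ball $B_R(x)$ of the Schreier graph $\mathcal{G}_{Y}$ of its orbit. One can then run the Juschenko--Monod construction (\cite{JM13}, or the groupoid version of \cite{JNdlS16}): using a recurrent simple random walk on each $\mathcal{G}_Y$, started so as to sweep across a prescribed finite $F$, one builds an almost-$K$-invariant finitely supported measure on $\mathscr{P}_f(X)$ concentrated on sets containing $F$ — recurrence being precisely what controls the near-invariance under bounded-jump maps and under translation by subsets of $F$. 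Equivalently, and perhaps more cleanly, one observes that $G_0\ltimes X$ is a recurrent étale groupoid and that $\mathrm{PW}(G_0\acts X)$ is contained in its full group, so the main theorem of \cite{JNdlS16} on extensions by recurrent groupoids directly gives that this full group acts extensively amenably on $X$.

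\textbf{Main obstacle.} The hard part is the core step, and within it the genuinely new point relative to the $\Z$-case is decoupling ``the group whose recurrence we control'' ($G_0$, finitely generated) from ``the group we want extensively amenable'' ($\mathrm{PW}(G_0\acts X)$, not finitely generated); the bridge is exactly the uniform bounded-jumps observation together with the directed-union reduction, and the engine is the recurrence-to-almost-invariant-measure mechanism of \cite{JM13, JNdlS16}, now applied to piecewise maps rather than to iterates of a single homeomorphism. The remaining verifications — the local character of extensive amenability, closure of $\mathrm{PW}(G_0\acts X)$ under composition and inverses, and the Reiter reformulation including the up-set condition — are routine but must be checked for compatibility.
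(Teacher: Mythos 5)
The paper does not actually prove Proposition \ref{prop:recurrent_orbits}; it imports it verbatim as Proposition 3.6 of \cite{Szo21}, so there is no internal proof to compare yours against. Judged on its own, your proposal is the standard argument and is correct in outline. Both reductions are right: extensive amenability is a local property (your weak-$*$ compactness / finite-intersection argument is the usual way to see that it suffices to treat finitely generated subgroups, cf.\ \cite{JMBMdlS18}), and a finitely generated $H=\langle\varphi_1,\dots,\varphi_k\rangle\leq \mathrm{PW}(G\acts X)$ does sit inside $\mathrm{PW}(G_0\acts X)$ for the finitely generated $G_0=\langle S\rangle\leq G$ you construct, at which point the hypothesis of the proposition supplies recurrence of every $G_0$-orbit Schreier graph. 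Your observation that elements of the piecewise group preserve $G_0$-orbits and act with uniformly bounded displacement there is exactly the right bridge.

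Two caveats on the core step. First, your route (a) --- rerunning the Juschenko--Monod construction for bounded-jump piecewise maps --- is not a proof as written: what you are sketching is the main theorem of \cite{JNdlS16}, a substantial result, and the description of ``sweeping a recurrent walk across $F$'' cannot substitute for it; route (b), citing \cite{JNdlS16}, is the intended move and is how \cite{Szo21} proceeds. Second, if the version of that theorem you invoke is stated for finitely generated groups whose orbital Schreier graphs are recurrent (rather than for the wobbling group of a recurrent bounded-degree graph), you must still check that the Schreier graphs of the \emph{$H$-orbits} are recurrent: $H$ is not a subgroup of $G$, so the hypothesis does not apply to it directly. This does follow, because each $H$-generator moves points a bounded distance in the bounded-degree $G_0$-orbit Schreier graph, so each $H$-orbit graph roughly embeds into a recurrent graph and is itself recurrent by Rayleigh monotonicity (\cite{LyPer16}, Chapter 2); one also needs the easy fact that extensive amenability over each block of an invariant partition of $X$ gives extensive amenability over $X$. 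With those two verifications made explicit, the argument is complete.
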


\begin{proposition}[Remark 1.5 in \cite{JMBMdlS18}]\label{prop:cocycle_amen_kernel}
Let $G\acts X$ be an extensively amenable action. Assume that there exists an embedding $G \hookrightarrow \mathscr{P}_f(X)$, $g\mapsto (c_g, g)$ such that the subgroup $\{ g\in G : c_g=\varnothing \}\leq G$ is amenable. Then $G$ itself is also amenable.
\end{proposition}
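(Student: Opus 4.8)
The plan is to read the embedding $g \mapsto (c_g,g)$ as landing in the semidirect product $W = \mathscr{P}_f(X) \rtimes G$, where $\mathscr{P}_f(X)$ carries the symmetric-difference group law $\triangle$ and $G$ acts through its action on $X$. Requiring this map to be a homomorphism is exactly the $1$-cocycle identity $c_{gh} = c_g \triangle g\cdot c_h$. I would first record the structural consequences: writing $\sigma_g(A) = c_g \triangle gA$ for the associated affine $G$-action on $\mathscr{P}_f(X)$, one has $\sigma_g(\varnothing) = c_g$, so $H = \{g : c_g = \varnothing\}$ is precisely the $\sigma$-stabiliser of $\varnothing$, and $g\mapsto c_g$ descends to a $G$-equivariant injection $\bar c\colon G/H \hookrightarrow \mathscr{P}_f(X)$ onto the orbit $O = G\cdot_\sigma\varnothing = \{c_g : g\in G\}$, intertwining left translation on $G/H$ with $\sigma$.

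With this in place, I would reduce the amenability of $G$ to a single amenable-action statement. By a standard induction-of-means argument, if $H$ is amenable and the action $G\acts G/H$ carries a $G$-invariant mean, then $G$ is amenable: one integrates a left-invariant mean on $H$ along the fibres of $G\to G/H$ and then applies the invariant mean on the base, the $H$-invariance absorbing the choice of section and the $G$-invariance of the base mean yielding left-invariance on $G$. Since $H$ is amenable by hypothesis, everything comes down to producing a $G$-invariant mean on $G/H$, equivalently (via $\bar c$) a $\sigma$-invariant mean on the orbit $O$.

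This last point is where extensive amenability must be spent, and it is the heart of the matter. The definition hands me a mean $m$ on $\mathscr{P}_f(X)$ that is invariant for the \emph{original} action $\rho_g(A)=gA$ and gives full weight to every filter $\{A : A\supseteq E\}$. The cleanest shape of the argument is this: if there were a $G$-equivariant ``coset-reading'' map $u\colon \mathscr{P}_f(X)\to G/H$ satisfying $u(gA)=g\cdot u(A)$, then $\Lambda(f) := m(f\circ u)$ would be the desired $G$-invariant mean on $G/H$, since precomposition turns $\rho$ into left translation and $m$ is $\rho$-invariant. An exact such $u$ need not exist, so in practice I would instead build a Reiter sequence on $O$: pass from $m$ to finitely supported probability measures on $\mathscr{P}_f(X)$ that are asymptotically $\rho$-invariant and concentrate on each filter, and transport them along the cocycle to finitely supported measures on $O$ that are asymptotically $\sigma$-invariant. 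The full-weight-on-filters property is exactly what is needed to make the finite translations $\tau_{c_g}$ hidden inside $\sigma_g = \tau_{c_g}\circ\rho_g$ wash out in the limit.

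The main obstacle is precisely this transfer. Two features are simultaneously required and they pull in opposite directions: the mean must become invariant for the \emph{twisted} affine action $\sigma$ rather than the original action $\rho$, and it must remain \emph{concentrated on the single orbit} $O$. Plain amenability of the $\rho$-action would not survive the affine twist, while naively symmetrising over the translations generated by the $c_g$ restores $\sigma$-invariance but destroys concentration on $O$; reconciling the two is what genuinely uses the filter hypothesis, and is the step I expect to be delicate. This is also the step packaged by the foundational results on extensive amenability in \cite{JM13}, \cite{JMBMdlS18} and \cite{BR18}, which one may alternatively invoke directly.
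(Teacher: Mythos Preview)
The paper does not supply its own proof of this proposition; it is quoted from \cite{JMBMdlS18} and used as a black box, so there is no in-paper argument to compare your attempt against.

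Your outline follows the standard route and is structurally correct: interpret the embedding as a section of $W=\mathscr{P}_f(X)\rtimes G$, identify $H$ with the $\sigma$-stabiliser of $\varnothing$ for the affine action $\sigma_g(A)=c_g\triangle gA$, and reduce amenability of $G$ to finding a $G$-invariant mean on $G/H$. You also correctly isolate the crux as upgrading the $\rho$-invariant mean furnished by the definition to a $\sigma$-invariant one. The way this is done in \cite{JMBMdlS18} is through the equivalent characterisation: $G\acts X$ is extensively amenable if and only if the full action $W\acts\mathscr{P}_f(X)$ admits an invariant mean. Granting this, the mean is $\sigma$-invariant for \emph{every} cocycle automatically, and your proposed Reiter-sequence transport is essentially the proof of that equivalence --- so invoking the literature directly, as you do in your final sentence, is entirely in keeping with how the present paper proceeds.

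One remark on your framing: you phrase the residual difficulty as needing the mean to be \emph{concentrated on the single orbit} $O$. That is not quite how the argument closes. With a $\sigma$-invariant mean on all of $\mathscr{P}_f(X)$ in hand, one concludes via the general fact that a group admitting an amenable action with amenable point-stabilisers is itself amenable; no concentration on $O$ is required, but one then needs every $\sigma$-stabiliser, not just $H$, to be amenable. For the coboundary cocycle $c_\varphi=Y\triangle\varphi(Y)$ actually used later in this paper that is harmless, since $\mathrm{Stab}_\sigma(A)$ is the setwise stabiliser of the finite perturbation $Y\triangle A$ of $Y$, and the argument of Proposition~\ref{prop:stab_loc_fin} applies to it verbatim.
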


In Proposition \ref{prop:cocycle_amen_kernel}, such a map $c \colon G \to \mathscr{P}_f(X)$, $g\mapsto c_g$ is called a \emph{cocycle with amenable kernel}. Thus, we can rephrase the statement of the proposition as follows: If $G\acts X$ is an extensively amenable action, and there exists a cocycle on $G$ with amenable kernel, then $G$ is amenable. Let us denote the orbit of $p$ by $X$, i.e., $X=G\cdot p$.

\section{The proof}

In this section we consider a finitely generated group $G = \langle S \rangle$ acting minimally on a compact space $\Sigma$ satisfying the assumption in Theorem \ref{thm:main}. Let $X$ be an orbit such that the Schreier graph $\mathrm{Sch}(G, S, X)$ is quasi-isometric to $\Z$.

\subsection{Action on the orbit $X$}

The set $X$ is dense in $\Sigma$, since the action $G\acts \Sigma$ is minimal. Consider the restricted action of $G$ on $X$. We can define the embedding 
\begin{align*}
\eps_X\colon [[G\acts \Sigma]] &\hooklongrightarrow \mathrm{PW}(G\acts X);\\
\varphi & \longmapsto \varphi_{\big|X}.
\end{align*}
Since $X$ is dense in $\Sigma$, the $\varphi$-action on $X$ determines the $\varphi$-action on $\Sigma$, so the map $\eps_X$ is injective.

\begin{definition}\label{def:qi}
Let $\dd$ denote the distance function on the graph $X$. Let $f\colon X \to \Z$ be the quasi-isometry between $X$ and $\Z$. By definition, there exist constants $\alpha\geq 1$, $\beta\geq 0$ and $\gamma\geq 0$ such that
\begin{enumerate}
\item for all $x, y\in X$ we have
\[ \alpha^{-1} \cdot \dd(x,y) - \beta \leq | f(x)-f(y) | \leq \alpha\cdot  \dd(x,y) + \beta,\]
\item for every $n\in \Z$ there is $x\in X$ such that $| f(x)-n | \leq \gamma$.
\end{enumerate}
\end{definition}

\begin{lemma}\label{le:localfin}
For every $n\in \Z$, the set $f^{-1}(n)\subseteq X$ is finite.
\end{lemma}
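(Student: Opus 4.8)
The plan is to use the quasi-isometry $f$ together with local finiteness of the Schreier graph. First I would observe that $\mathrm{Sch}(G,S,X)$ is a graph of bounded degree: every vertex $x\in X$ has at most $|S|$ neighbors, namely the points $sx$ for $s\in S$. Consequently, for any $x\in X$ and any $r\in\N$, the ball $B_r(x)$ is finite, with $|B_r(x)|\leq 1+|S|+\cdots+|S|^r$.

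Next I would use property (1) of Definition~\ref{def:qi} in the contrapositive direction. Fix $n\in\Z$ and pick any point $x_0\in f^{-1}(n)$ (if $f^{-1}(n)$ is empty there is nothing to prove). For any other $x\in f^{-1}(n)$ we have $|f(x)-f(x_0)|=0$, so the lower bound $\alpha^{-1}\dd(x,x_0)-\beta\leq |f(x)-f(x_0)|=0$ forces $\dd(x,x_0)\leq\alpha\beta$. Hence $f^{-1}(n)\subseteq B_{\alpha\beta}(x_0)$, which is a finite set by the previous paragraph. Therefore $f^{-1}(n)$ is finite.

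The argument is essentially immediate once the two ingredients are in place, so there is no real obstacle; the only thing to be slightly careful about is that $\alpha\beta$ need not be an integer, so one should take the ball of radius $\lfloor\alpha\beta\rfloor$ (or just $\lceil\alpha\beta\rceil$) — this does not affect finiteness. It is also worth noting explicitly that this bound is uniform: every fiber $f^{-1}(n)$ has cardinality at most $|B_{\lceil\alpha\beta\rceil}(x_0)|$ for a suitable basepoint, but the statement only asks for finiteness, so recording the uniform bound is optional.
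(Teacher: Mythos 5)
Your proof is correct and follows essentially the same route as the paper: the lower quasi-isometry bound forces $f^{-1}(n)\subseteq B_{\alpha\beta}(x_0)$, which is finite by local finiteness of the Schreier graph. The extra remarks about rounding $\alpha\beta$ and the uniform bound are fine but not needed.
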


\begin{proof}
If $f^{-1}(n)=\varnothing$, then it is finite. Now assume that it is non-empty. Let $x\in f^{-1}(n)$, then by Definition \ref{def:qi}, for any $y\in f^{-1}(n)$ we have 
\begin{align*}
 \alpha^{-1}\dd(x,y)-\beta  &\leq  |f(x)-f(y)| =0\\
 \dd(x,y) &\leq \alpha\beta.
\end{align*}
Hence, $f^{-1}(n)$ is contained in the ball of radius $\alpha\beta$ around $x$. This ball is a finite set since the graph $X$ is locally finite, so $f^{-1}(n)$ is also finite.
\end{proof}

The following two propositions are well-known for any graph that is quasi-isometric to $\Z$, but we include their proofs for completeness.

\begin{proposition}\label{prop:bi-inf}
There exists a bi-infinite geodesic in $X$.
\end{proposition}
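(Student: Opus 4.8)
The plan is to build the bi-infinite geodesic as a limit of longer and longer finite geodesics, using a compactness (König's lemma / diagonal) argument that is made possible by the local finiteness of $X$. First I would record two consequences of Definition \ref{def:qi}: the graph $X$ is connected, locally finite, and infinite (since $f$ is a quasi-isometry onto a space with infinite diameter, $X$ cannot have finite diameter), and moreover $f$ distorts distances only up to the multiplicative constant $\alpha$ and additive constant $\beta$. In particular, for a geodesic segment $x_0, x_1, \dots, x_k$ in $X$ the endpoints satisfy $|f(x_0) - f(x_k)| \geq \alpha^{-1}k - \beta$, so $|f(x_0) - f(x_k)| \to \infty$ as $k \to \infty$.

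Next I would produce, for each $n \in \N$, a geodesic segment of length $2n$ whose midpoint is a fixed basepoint $o \in X$. The natural way: fix $o$; by property (2) of the quasi-isometry and the previous estimate, for every $n$ there exist points $a_n, b_n \in X$ with $\dd(o, a_n) \geq n$ and $\dd(o, b_n) \geq n$ and with $f(a_n) \leq f(o) - n/(2\alpha)$ (say) and $f(b_n) \geq f(o) + n/(2\alpha)$, i.e.\ lying ``on opposite sides''. Concatenating a geodesic from $a_n$ to $o$ with a geodesic from $o$ to $b_n$ gives a path through $o$; one must check this concatenation is itself a geodesic, which follows because any shortcut from the $a_n$-side to the $b_n$-side would have to pass near $o$ in $\Z$-coordinate (the $f$-values of the two sides are separated by a gap that grows with $n$), forcing it through a neighborhood of $o$ whose diameter is bounded independently of $n$ — after trimming by that bounded amount we still get geodesic segments through $o$ of length at least $n$ on each side. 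Then I truncate to a geodesic $x^{(n)}_{-m_n}, \dots, x^{(n)}_{-1}, x^{(n)}_0 = o, x^{(n)}_1, \dots, x^{(n)}_{m_n}$ with $m_n \to \infty$.

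Now I would apply the diagonal argument. Since $X$ is locally finite, the ball $B_1(o)$ is finite, so infinitely many of the $x^{(n)}$ share the same $x^{(n)}_{1}$ and the same $x^{(n)}_{-1}$; pass to that subsequence, then repeat for $x^{(n)}_{2}$ and $x^{(n)}_{-2}$ using finiteness of $B_2(o)$, and iterate, taking a diagonal subsequence. This yields a bi-infinite sequence $(y_j)_{j \in \Z}$ with $y_0 = o$ such that for every $k$, the finite segment $y_{-k}, \dots, y_k$ is a sub-segment of some $x^{(n)}$ with $m_n \geq k$, hence a geodesic in $X$. Therefore $\dd(y_i, y_j) = |i-j|$ for all $i, j \in \Z$, so $(y_j)_{j\in\Z}$ is a bi-infinite geodesic.

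The main obstacle is the step showing that a path obtained by gluing two one-sided geodesics at $o$ is (after a bounded trim) genuinely geodesic — equivalently, that one can find, for arbitrarily large $n$, geodesic segments of length $n$ on ``both sides'' of $o$ that together form a geodesic. This is where the quasi-isometry hypothesis does real work: it guarantees that points with $f$-value far below $f(o)$ and points with $f$-value far above $f(o)$ are genuinely far from each other in $X$ (not just in $\Z$), and that a geodesic between such points cannot avoid a bounded neighborhood of $o$. Everything after that is a routine König/diagonal compactness argument relying only on local finiteness.
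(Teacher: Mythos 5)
Your overall architecture matches the paper's: produce, for every $n$, a geodesic of length $2n$ centred at a common vertex, then extract a bi-infinite geodesic by a K\H{o}nig's lemma / diagonal argument using local finiteness (this second half is exactly the paper's Lemma \ref{le:bi-inf} and is fine). The gap is in the first half. You fix an arbitrary basepoint $o$ and claim that concatenating geodesics $[a_n,o]$ and $[o,b_n]$ yields, after a bounded trim, geodesic segments of length at least $n$ on each side \emph{through $o$}. This fails for a general $o$: take $X$ to be the line $\Z$ with one extra vertex $o$ attached by a single edge to the vertex $0$. This graph is quasi-isometric to $\Z$, but $o$ has degree one, so no geodesic of length at least $2$ has $o$ as an interior vertex; the concatenation $[a_n,o]\cup[o,b_n]$ traverses the edge $(0,o)$ twice and is not even a simple path. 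The trimming does not repair this: what is true is that a genuine geodesic $[a_n,b_n]$ must pass through some point $y_n$ of the finite set $f^{-1}([0,\alpha+\beta])$ (since $f$ changes by at most $\alpha+\beta$ along an edge), giving long geodesic segments centred at $y_n$ --- but $y_n$ need not equal $o$, and a priori it varies with $n$.

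The missing idea, which is the one genuinely non-routine point of the paper's argument, is to let the candidate midpoint range over the finite set $B=f^{-1}([0,\alpha+\beta])$ and observe that the sets $B_i=\{x\in B : x \text{ is the midpoint of a length-}2i\text{ geodesic}\}$ form a decreasing chain of nonempty finite sets (decreasing because a sub-segment of a geodesic is a geodesic), so their intersection contains a single point that serves as midpoint for every length simultaneously. Equivalently, a pigeonhole applied to your points $y_n$, which all lie in the finite set $B$, would rescue your construction. As written, however, the step producing arbitrarily long geodesics centred at the fixed basepoint $o$ does not go through.
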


\begin{lemma}\label{le:bi-inf}
Let $\mathcal{G}$ be a locally finite graph, i.e., the degree of every vertex is finite. The following are equivalent for a vertex $v\in \mathcal{G}$.
\begin{enumerate}[nolistsep]
\item For every $n\in \N$, there exists a geodesic of length $2n$ with midpoint $v$. 
\item There exists a bi-infinite geodesic through the vertex $v$.
\end{enumerate}
\end{lemma}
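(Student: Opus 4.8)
\textbf{Proof proposal for Lemma \ref{le:bi-inf}.}

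The implication $(2)\Rightarrow(1)$ is immediate: if $(\ldots,x_{-1},x_0=v,x_1,\ldots)$ is a bi-infinite geodesic through $v$, then its restriction to indices $-n\le i\le n$ is a geodesic of length $2n$ with midpoint $v$, since any subpath of a geodesic is a geodesic. So the content is in $(1)\Rightarrow(2)$, and this is where a compactness argument is needed.

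The plan for $(1)\Rightarrow(2)$ is a standard König's lemma / diagonal extraction. For each $n$, fix a geodesic $P_n=(x^{(n)}_{-n},\ldots,x^{(n)}_0=v,\ldots,x^{(n)}_n)$ of length $2n$ with midpoint $v$. I want to extract a single bi-infinite path that is a geodesic. First consider the middle edges: there are finitely many vertices adjacent to $v$ (local finiteness), so some neighbor $y_1$ of $v$ occurs as $x^{(n)}_1$ for infinitely many $n$; pass to that subsequence. Among those, some neighbor $y_{-1}$ of $v$ occurs as $x^{(n)}_{-1}$ infinitely often; pass to that subsequence. Continuing, having fixed $y_{-k},\ldots,y_k$ realized by infinitely many $P_n$, the vertex $x^{(n)}_{k+1}$ ranges (for those $n$) over the finitely many neighbors of $y_k$, so one value $y_{k+1}$ occurs infinitely often; then similarly pick $y_{-(k+1)}$. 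This produces a bi-infinite sequence $(y_i)_{i\in\Z}$ with $y_0=v$ and consecutive vertices adjacent. By a diagonal argument, for every $m$ there is some $P_n$ (indeed infinitely many, with $n\ge m$) whose restriction to $[-m,m]$ equals $(y_{-m},\ldots,y_m)$. Hence for each fixed $m$, the path $(y_{-m},\ldots,y_m)$ is a subpath of a geodesic, so it is itself a geodesic; in particular $\dd(y_i,y_j)=|i-j|$ for all $i,j$. Since this holds for every $m$, the bi-infinite path $(y_i)_{i\in\Z}$ is a geodesic through $v$, which gives $(2)$.

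The only mild subtlety, and the step I would be most careful about, is ensuring the extracted sequence $(y_i)$ is genuinely realized on every finite window simultaneously — i.e., that the nested subsequences can be organized so that a fixed tail of $P_n$'s agrees with $(y_{-m},\ldots,y_m)$ for each $m$. This is handled cleanly by the diagonal argument: at stage $k$ we keep an infinite set $N_k$ of indices with $N_0\supseteq N_1\supseteq\cdots$ such that every $P_n$ with $n\in N_k$ restricts to $(y_{-k},\ldots,y_k)$ on $[-k,k]$; then for each $m$ any $n\in N_m$ works. Everything else is routine: subpaths of geodesics are geodesics, and a path all of whose finite subpaths are geodesics is a geodesic.
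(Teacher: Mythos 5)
Your proof is correct and is essentially the paper's argument: the paper runs the same compactness extraction by applying K\H{o}nig's lemma to the locally finite rooted tree whose vertices are the finite geodesics of even length with midpoint $v$, ordered by containment, and an infinite ray in that tree is exactly your nested sequence of windows $(y_{-k},\dots,y_k)$ together with the index sets $N_k$. The final step --- that a bi-infinite path all of whose finite subpaths are geodesics is itself a geodesic --- is the same in both versions.
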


\begin{proof}
The implication $2\Rightarrow 1$~is clear. For the other direction, consider a vertex $v$ that satisfies the first statement.

Let us construct the rooted tree $\mathcal{T}$ as follows.  The vertices of $\mathcal{T}$ are the finite geodesics in $\mathcal{G}$ of even length with midpoint $v$. Two such geodesics are connected in $\mathcal{T}$ if their length difference is exactly 2 and the shorter one is a subset of the longer one. The root is the ``geodesic'' of length zero consisting only of the point $v$, and the $n$-th level of the tree consists of the geodesics of length $2n$. By the local finiteness of $\mathcal{G}$, the rooted tree $\mathcal{T}$ is also locally finite.

By K\H{o}nig's lemma, there exists an infinite ray in $\mathcal{T}$ from the root, say $\{v\}=\ell_0, \ell_1, \ell_2, \ell_3, \dots$, where the length of $\ell_i$ is $2i$ and $\ell_i\subseteq \ell_{i+1}$ for every $i\in \N$. Then the union $\bigcup_{i\in \N} \ell_i = \ell \subseteq \mathcal{G}$ is a bi-infinite geodesic in $\mathcal{G}$. This proves the implication $1\Rightarrow 2$.
\end{proof}

\begin{proof}[Proof of Proposition \ref{prop:bi-inf}.]
Let $f\colon X\to \Z$ be the quasi-isometry with constants $\alpha$, $\beta$, $\gamma$, and let us define $B= f^{-1}([0, \alpha+\beta])$. By Lemma \ref{le:localfin}, the set $f^{-1}(z)\subseteq X$ is finite for every $z\in \Z$. Consequently, $B$ is finite.

Let
\[ B_i = \{ x\in B\ | \ x \text{ is the midpoint of a length } 2i \text{ geodesic in } X\} \]
for all $i\in \N$. We have $B_{i+1}\subseteq B_i$ for every $i$.

We show that $B_i\neq \varnothing$ for every $i\in\N$. Consider a fixed $i\in \N$ and take $k\in\N$ such that $k\geq \alpha\cdot i + \beta$. Note that this choice ensures that $|f(x)-f(y)| > k$ implies $\dd(x,y) > i$ for some $x,y\in X$ by Definition \ref{def:qi}. Take $x_1, x_2\in X$ such that $f(x_1)< -k$ and $f(x_2)> \alpha+\beta+ k$. (It is possible to find such points by the second statement of Definition \ref{def:qi}.) Then $\dd(x_1, B)>i$ and $\dd(x_2, B)>i$ both hold.

Let $[x_1, x_2]$ be a shortest path between the two points, i.e., a geodesic. Note that if $u,v\in X$ are neighbors, then $|f(u)-f(v)|\leq \alpha+\beta$. Indeed, this holds by Definition \ref{def:qi}: $|f(u)-f(v)|\leq \alpha\ \dd(u,v)+\beta =\alpha+\beta$. Hence, by walking along the path $[x_1, x_2]$, the $f$-image changes by at most $\alpha+\beta$ at every step. On the other hand, we know that $f(x_1)<0$ and $f(x_2)>\alpha+\beta$, so there must be a point on this path $y\in [x_1, x_2]$, such that $0\leq f(y)\leq \alpha+\beta$. This implies that $y\in B$, so we have $\dd(x_1, y)>i$ and $\dd(y, x_2)>i$. Therefore, there exists a geodesic of length $2i$ with midpoint $y$, and hence $B_i\neq \varnothing$.

We proved that
\[B= B_0\supseteq B_1\supseteq B_2\supseteq \dots\]
is a decreasing sequence of non-empty finite sets. Therefore, their intersection is also non-empty. Take a point $x\in \bigcap_{i\in\N} B_i$. Then for every $i\in\N$, there exists a geodesic of length $2i$ in $X$ with midpoint $x$. Hence, by Lemma \ref{le:bi-inf}, there is a bi-infinite geodesic through the point $x$.
\end{proof}

\begin{proposition}\label{prop:m_geod}
There exists a constant $m\in\N$ such that $X$ is contained in the $m$-neighborhood of any bi-infinite geodesic in $X$.
\end{proposition}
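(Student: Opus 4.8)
The plan is to prove the apparently stronger statement that the $f$-image of \emph{any} bi-infinite geodesic in $X$ is $(\alpha+\beta)$-coarsely dense in $\Z$; the constant $m$ then falls out of the quasi-isometry estimate of Definition~\ref{def:qi} and depends only on $\alpha$ and $\beta$, not on the particular geodesic.

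First I fix a bi-infinite geodesic, parametrise it as $(v_i)_{i\in\Z}$ with $\dd(v_i,v_j)=|i-j|$, and set $g(i):=f(v_i)$. By Definition~\ref{def:qi},
\[ \alpha^{-1}|i-j|-\beta\ \le\ |g(i)-g(j)|\ \le\ \alpha|i-j|+\beta \qquad(i,j\in\Z);\]
in particular $|g(i+1)-g(i)|\le\alpha+\beta$, so $g$ cannot step across an interval of length exceeding $\alpha+\beta$. The substantive claim is that $\sup_i g(i)=+\infty$ and $\inf_i g(i)=-\infty$. From the lower bound $|g(i)-g(0)|\ge\alpha^{-1}|i|-\beta\to\infty$, the image $g(\Z)$ is unbounded; suppose towards a contradiction that it is bounded below. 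Then $g$ attains a minimum at some index $i_*$, and, since $|g(i)-g(0)|\to\infty$ while $g$ stays bounded below, $g$ must take arbitrarily large values both along $i\to+\infty$ and along $i\to-\infty$. Fix a large threshold $R$. On the left of $i_*$, start from an index where $g\ge R$ and walk towards $i_*$ (where $g=g(i_*)$ is much smaller); stopping the first time $g$ drops below $R$ and using $|g(i+1)-g(i)|\le\alpha+\beta$ produces an index $a'<i_*$ with $g(a')\in[R-(\alpha+\beta),R)$. Symmetrically one gets $b'>i_*$ with $g(b')\in[R-(\alpha+\beta),R)$. Now $|g(a')-g(b')|<\alpha+\beta$ together with the lower bound forces $|a'-b'|\le\alpha(\alpha+2\beta)$, a bound independent of $R$; but the upper bound applied to the pairs $(a',i_*)$ and $(i_*,b')$ forces both $i_*-a'$ and $b'-i_*$, hence $b'-a'$, to grow linearly in $R$ — a contradiction once $R$ is large. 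The symmetric argument excludes $g$ being bounded above, and the claim follows. Finally, since $g$ moves by at most $\alpha+\beta$ per step and is unbounded above and below, a crossing argument (walk from an index with $g<n$ to one with $g>n$) yields, for every $n\in\Z$, an index $k$ with $|g(k)-n|\le\alpha+\beta$.

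To finish, let $x\in X$ be arbitrary, apply the previous step to $n:=f(x)$, and obtain a vertex $v_k$ on the geodesic with $|f(x)-f(v_k)|\le\alpha+\beta$. The left inequality of Definition~\ref{def:qi} then gives $\alpha^{-1}\dd(x,v_k)-\beta\le\alpha+\beta$, so $\dd(x,v_k)\le\alpha(\alpha+2\beta)$, and therefore $X$ lies in the $m$-neighbourhood of the geodesic with $m:=\lceil\alpha(\alpha+2\beta)\rceil$, which does not depend on the geodesic. I expect the only delicate point to be ruling out that $g$ is bounded below (or above): the naive linear estimates do not settle this for all values of $\alpha$, and the fix is precisely the threshold argument above, where a distance that is bounded by the coarse-Lipschitz constant is played off against one that is forced to diverge.
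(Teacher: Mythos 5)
Your proof is correct and follows essentially the same route as the paper's: the heart of both arguments is that $f$ restricted to a bi-infinite geodesic cannot be bounded on one side, which is ruled out by exhibiting two far-apart geodesic points on opposite sides of the extremum with nearly equal $f$-values and playing this against the lower quasi-isometry bound. You package this as coarse surjectivity of $f|_{\ell}$ onto $\Z$ and conclude directly, whereas the paper argues by contradiction from a point far from $\ell$, but the mechanism and the resulting constant $m=\alpha(\alpha+2\beta)$ are the same.
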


\begin{proof}
Let $f\colon X \to \Z$ be the quasi-isometry with constants $\alpha$, $\beta$, $\gamma$ as in Definition \ref{def:qi}. Let $m=\alpha^2+2\alpha\beta$. Let $\ell$ be a bi-infinite geodesic in $X$, we would like to show that $X$ is contained in the $m$-neighborhood of $\ell$.

First, note that if $u,v\in X$ such that $\dd(u,v)>m= \alpha^2 +2\alpha\beta$, then we have 
\begin{align}
\alpha^{-1} \dd(u,v) -\beta & \leq |f(u)-f(v)| \nonumber \\
\alpha^{-1} m - \beta & < |f(u)-f(v)| \nonumber \\
\alpha +\beta & < |f(u)-f(v)| \label{eq:m_geod}
\end{align}

Take an arbitrary $x\in X$, and suppose for contradiction that $\dd(x, \ell) > m = \alpha^2 +2\alpha\beta$. This implies that for every $y\in \ell$, we have $\alpha +\beta < |f(x)-f(y)|$ by (\ref{eq:m_geod}).

Note that if $u,v\in X$ are neighbors, then $|f(u)-f(v)| \leq \alpha \ \dd(u,v) + \beta = \alpha +\beta$. Therefore, as we walk along the geodesic $\ell$, the $f$-image cannot jump over the value $f(x)$, since the distance of $f(\ell)$ from $f(x)$ is more than $\alpha+\beta$. Hence, $f(\ell)$ must be contained in a half-line, either $(-\infty, f(x)- \alpha -\beta)$ or $(f(x)+\alpha +\beta, +\infty )$.

We will show that $f(\ell)$ cannot be contained in a half-line. Without loss of generality, suppose that $f(\ell) \subseteq (N, +\infty )$, such that $N= \min f(\ell) \geq f(x)+\alpha +\beta$, and take $x_0\in \ell$ so that $f(x_0)=N$. Let $I= B_m(x_0) \cap \ell$ be a geodesic segment of length $2m$ on $\ell$ around $x_0$.

Take $y_1, y_2\in \ell\setminus I$ be in different components of $\ell \setminus I$ such that $f(y_1)\leq f(y_2)$. Consider $[x_0, y_2]$, which denotes a shortest path between the two points in $X$, in this case we may take the path that is contained in $\ell$. We know that $|f(u)-f(v)|\leq \alpha+\beta$ if $u$ and $v$ are neighbors, hence if we ``walk'' along the path $[x_0, y_2]$, the $f$-image changes by at most $\alpha+\beta$ in each step. Since $N=f(x_0) \leq f(y_1)\leq f(y_2)$, we can find $y_3\in [x_0, y_2]$, such that $|f(y_3)-f(y_1)|\leq \alpha + \beta$. On the other hand, $\dd(y_1, y_3)\geq \dd(y_1, x_0)> m = \alpha^2 +2\alpha\beta$, which is a contradiction by (\ref{eq:m_geod}).

Therefore, for every $x\in X$, we have $\dd(x, \ell) \leq m$, so $X$ is contained in the $m$-neighborhood of $\ell$.
\end{proof}

\subsection{Definition of the cocycle}

\begin{definition}\label{def:Y}
Let $f\colon X\to \Z$ be the quasi-isometry from Definition \ref{def:qi}. Let us define
\[Y = f^{-1}(\N) \subseteq X.\]
\end{definition}

For a subgraph $H$ of $X$ let us denote by $\partial H$ the vertices on the boundary of $H$, i.e., let
\[ \partial H = \{ x\in H : \text{there exists } y\in X\setminus H \text{ such that } (x,y)\in E(X) \} \subseteq H \subseteq X.\]

\begin{lemma}\label{le:boundY}
The set $Y$ is infinite and $\partial Y$ is finite.
\end{lemma}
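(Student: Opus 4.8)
The plan is to establish the two claims separately, both leveraging the quasi-isometry $f\colon X\to\Z$ and its constants $\alpha,\beta,\gamma$ from Definition \ref{def:qi}. First I would show $Y=f^{-1}(\N)$ is infinite. The second quasi-isometry axiom gives, for every $n\in\N$, some $x\in X$ with $|f(x)-n|\le\gamma$, hence $f(x)\ge n-\gamma$; since $n$ is arbitrary this already produces points of $X$ with arbitrarily large $f$-value, so infinitely many of them land in $\N$. To be careful that these are genuinely distinct points, I would combine this with Lemma \ref{le:localfin}: each fibre $f^{-1}(k)$ is finite, so a set hitting $f$-values that grow without bound cannot be finite. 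Thus $Y$ is infinite.

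The substantive part is that $\partial Y$ is finite. The key observation is the Lipschitz-type estimate used repeatedly in the previous proofs: if $u,v\in X$ are neighbours in the Schreier graph, then $|f(u)-f(v)|\le\alpha\cdot\dd(u,v)+\beta=\alpha+\beta$. Now if $x\in\partial Y$, there is a neighbour $y\in X\setminus Y$, i.e.\ $f(y)<0$, equivalently $f(y)\le -1$. Since $f(x)\ge 0$ (as $x\in Y$) and $|f(x)-f(y)|\le\alpha+\beta$, we get $f(x)=f(x)-0\le f(x)-f(y)-1\le\alpha+\beta-1$, so $0\le f(x)\le\alpha+\beta$. Therefore $\partial Y\subseteq f^{-1}\bigl([0,\alpha+\beta]\bigr)=\bigcup_{k=0}^{\lfloor\alpha+\beta\rfloor} f^{-1}(k)$, which is a finite union of finite sets by Lemma \ref{le:localfin}, hence finite. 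This completes the proof.

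I do not expect any real obstacle here; the only point requiring a little care is making sure ``infinite'' for $Y$ is argued via distinctness of points rather than just distinctness of $f$-values, which is exactly what Lemma \ref{le:localfin} supplies, and making sure the boundary estimate uses that $f$ takes integer values on $X$ — actually $f$ maps into $\Z$, so $f(y)<0$ does give $f(y)\le -1$, which is what tightens the bound; even without that, $\partial Y\subseteq f^{-1}([0,\alpha+\beta])$ still holds and the argument goes through unchanged.
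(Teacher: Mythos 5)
Your proof is correct and follows essentially the same route as the paper: coarse surjectivity of $f$ plus finiteness of the fibres $f^{-1}(k)$ (Lemma \ref{le:localfin}) gives infiniteness of $Y$, and the neighbour estimate $|f(u)-f(v)|\le\alpha+\beta$ traps $\partial Y$ in $f^{-1}([0,\alpha+\beta])$, a finite set. The only cosmetic difference is that the paper extracts distinct points of $Y$ from disjoint intervals of length $2\gamma$ in $\N$ rather than from unbounded $f$-values, but this is the same idea.
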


\begin{proof}
By the second requirement in Definition \ref{def:qi}, we have that $f^{-1}(I)\neq \varnothing$ for every interval $I$ of length at least $2\gamma$. Since $\N$ contains infinitely many pairwise disjoint intervals of length $2\gamma$, the preimage $f^{-1}(\N)= Y$ is infinite.

For $x\in H$ and $y\in X\setminus H$ we have $(x,y)\in E(X)$ if and only if $\dd(x,y)=1$. By Definition \ref{def:qi}, we have
\[ \alpha^{-1}-\beta =\alpha^{-1}\dd(x,y) -\beta \leq |f(x)-f(y)|\leq \alpha\ \dd(x,y)+\beta=\alpha+\beta.\]
Therefore, since $f(x)\in \N$ and $f(y)\in \Z\setminus \N$, we must have $0\leq f(x)\leq \alpha+\beta-1$, so $\partial Y\subseteq f^{-1}([0,\alpha+\beta-1])$. The latter is a finite set by Lemma \ref{le:localfin}, and hence $\partial Y$ is also finite.
\end{proof}

\begin{lemma}\label{le:cocycle}
For every group element $g\in G$, the set $gY\setminus Y$ is finite.
\end{lemma}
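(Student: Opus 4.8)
The plan is to show that $g$ displaces every vertex of $X$ by a uniformly bounded amount, so that only the $g$-images of points of $Y$ lying in finitely many fibres of $f$ can possibly escape from $Y$; Lemma \ref{le:localfin} then finishes the argument.

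Concretely, writing $g = s_1 s_2 \cdots s_L$ with $s_i \in S$ and $L = \mathrm{len}(g)$, the sequence $x,\ s_L x,\ s_{L-1}s_L x,\ \dots,\ s_1\cdots s_L x = gx$ is a path of length $L$ in the Schreier graph $\mathrm{Sch}(G,S,X)$, so $\dd(x, gx) \le L$ for every $x \in X$. Plugging this into the right-hand inequality of Definition \ref{def:qi} yields
\[ |f(x) - f(gx)| \ \le\ \alpha\,\dd(x,gx) + \beta \ \le\ \alpha L + \beta \ =:\ C \]
for all $x \in X$. Now if $gx \in gY \setminus Y$ with $x \in Y$, then $f(x) \ge 0$ (since $x \in Y = f^{-1}(\N)$) while $f(gx) \le -1$ (since $gx \notin Y$), so the displayed bound forces $0 \le f(x) \le f(gx) + C < C$; hence every such $x$ lies in $f^{-1}(\{0,1,\dots,\lceil C\rceil\})$.

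By Lemma \ref{le:localfin} each fibre $f^{-1}(n)$ is finite, so $f^{-1}(\{0,1,\dots,\lceil C\rceil\})$ is finite, being a finite union of finite sets. Since $g$ acts on $X$ by a bijection and $gY \setminus Y$ is contained in the $g$-image of this finite set, $gY \setminus Y$ is finite, as required. I do not expect a real obstacle here; the only step that needs a moment of care is the displacement bound $\dd(x,gx)\le\mathrm{len}(g)$, i.e.\ the standard comparison between the word metric on $G$ and the graph metric on a Schreier graph, and everything else is routine.
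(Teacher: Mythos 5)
Your proof is correct. Both you and the paper start from the same displacement bound $\dd(x,gx)\le\mathrm{len}(g)$; the paper then observes that $gY\setminus Y$ lies in the $\mathrm{len}(g)$-neighborhood of the finite set $\partial Y$ (Lemma \ref{le:boundY}), whereas you inline the corresponding quasi-isometry computation to trap the points of $Y$ that escape into finitely many fibres of $f$ and invoke Lemma \ref{le:localfin} directly --- essentially the same argument, since the finiteness of $\partial Y$ is itself proved by exactly your fibre estimate.
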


\begin{proof}
Notice that for all $x\in X$ we have $\dd(x,gx)\leq \mathrm{len}(g)$. Therefore, the set $gY\setminus Y$ is contained in the $\mathrm{len}(g)$-neighborhood of $\partial Y$. Since $\partial Y$ is finite by Lemma \ref{le:boundY}, the $\mathrm{len}(g)$-neighborhood is also a finite set by the local finiteness of $X$. Hence, $gY\setminus Y$ is finite.
\end{proof}

\begin{proposition}\label{prop:cocycle}
For every piecewise map $\varphi\in \mathrm{PW}(G\acts X)$, the set $Y\triangle \varphi(Y)$ is finite.
\end{proposition}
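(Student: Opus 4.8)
The plan is to reduce the statement to Lemma \ref{le:cocycle} by exploiting the definition of a piecewise map. Recall that $\varphi\in\mathrm{PW}(G\acts X)$ means there is a finite subset $S_\varphi = \{g_1,\dots,g_k\}\subseteq G$ and a partition $X = X_1\sqcup\dots\sqcup X_k$ such that $\varphi(x) = g_i x$ for every $x\in X_i$. First I would observe that $\varphi(Y) = \bigcup_{i=1}^k \varphi(Y\cap X_i) = \bigcup_{i=1}^k g_i(Y\cap X_i)$, and similarly decompose $Y$ itself along the same partition as $Y = \bigsqcup_{i=1}^k (Y\cap X_i)$.

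The key step is then the set-theoretic estimate
\[
Y\triangle \varphi(Y) \subseteq \bigcup_{i=1}^k \bigl( (Y\cap X_i)\triangle g_i(Y\cap X_i)\bigr) \cup \bigcup_{i\neq j}\bigl( g_i(Y\cap X_i) \cap g_j(Y\cap X_j)\bigr),
\]
or, more cleanly, I would argue that each element of $Y\triangle\varphi(Y)$ witnesses a ``defect'' localized near one of the pieces. Concretely: if $x\in Y\setminus\varphi(Y)$, then $\varphi^{-1}(x) = g_i^{-1}x$ lies in some $X_i$ but $g_i^{-1}x\notin Y$, so $x\in g_i Y \setminus$ (more precisely $x \in g_i(X_i\setminus Y)$), hence $x\in g_i(X\setminus Y)$; combined with $x\in Y$ this gives $x\in Y\setminus g_i Y$ — wait, I need $x\in g_i X\setminus g_i Y$, i.e.\ $x\notin g_i Y$, so $x\in Y\setminus g_i Y \subseteq Y\triangle g_iY$. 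Conversely if $x\in\varphi(Y)\setminus Y$, then $x = g_i y$ for some $y\in Y\cap X_i$, so $x\in g_i Y\setminus Y \subseteq Y\triangle g_i Y$. Therefore
\[
Y\triangle\varphi(Y) \subseteq \bigcup_{i=1}^k \bigl(Y\triangle g_i Y\bigr).
\]
By Lemma \ref{le:cocycle}, each set $g_i Y\setminus Y$ is finite, and applying the lemma to $g_i^{-1}$ shows $g_i^{-1}Y\setminus Y$ is finite, hence $Y\setminus g_iY = g_i(g_i^{-1}Y\setminus Y)$ is finite too; so $Y\triangle g_i Y$ is finite for each $i$. A finite union of finite sets is finite, which gives the claim.

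The main obstacle — and it is a minor one — is being careful with the direction of the inclusion when $x\in Y\setminus\varphi(Y)$: one must verify that $x\notin g_i Y$ rather than concluding something about the wrong translate, which requires tracking that $\varphi$ restricted to the piece containing $\varphi^{-1}(x)$ is the map $y\mapsto g_i y$, so that $\varphi^{-1}(x) = g_i^{-1}x$. Everything else is the bookkeeping of symmetric differences and the fact that $\mathrm{PW}(G\acts X)$ is a group, so $\varphi^{-1}$ is also piecewise with the inverse group elements $g_i^{-1}$ on the pieces $\varphi(X_i)$. No further geometric input (quasi-isometry constants, geodesics) is needed beyond what is already packaged in Lemma \ref{le:cocycle}.
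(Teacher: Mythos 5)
Your proof is correct and follows essentially the same route as the paper: both reduce the statement to Lemma \ref{le:cocycle} by covering $\varphi(Y)\setminus Y$ with the finitely many sets $g_iY\setminus Y$ coming from the piecewise structure, and handle $Y\setminus\varphi(Y)$ by the symmetric argument applied to $\varphi^{-1}$ (which is piecewise with the inverse elements). The extra care you take in tracking which translate witnesses $x\in Y\setminus\varphi(Y)$ is sound but not needed beyond what the paper's shorter formulation already gives.
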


\begin{proof}
There exists a finite set $T \subseteq G$ such that for every $x\in X$ we have $\varphi(x)\in T\cdot x$. Hence, we have the inclusion
\[ \varphi(Y)\setminus Y \subseteq \left( \bigcup_{t\in T} t Y\right) \setminus Y = \bigcup_{t\in T} (t Y\setminus Y).\]
By Lemma \ref{le:cocycle}, $tY\setminus Y$ is finite for all $t\in T$, so $\varphi(Y)\setminus Y$ is also finite. The same argument works for $\varphi^{-1}(Y)\setminus Y$, and hence $\varphi\left( \varphi^{-1}(Y)\setminus Y\right) = Y\setminus \varphi(Y)$ is finite as well. This implies that the set
\[ Y \triangle \varphi(Y) = (Y\setminus \varphi(Y))\cup (\varphi(Y)\setminus Y)\]
is also finite, finishing the proof.
\end{proof}

\begin{definition}\label{def:cocycle}
For $\varphi\in \mathrm{PW}(G\acts X)$ let us define
\[c_{\varphi}= Y\triangle \varphi(Y)\in \PP_f(X).\]
\end{definition}

\begin{remark}\label{rem:cocycle}
We defined the map $c\colon \mathrm{PW}(G\acts X) \to \mathscr{P}_f(X)$. This gives rise to the cocycle $c\colon [[G\acts \Sigma ]]\to \mathscr{P}_f(X)$. We would like to show that its kernel $\{\varphi\in [[G\acts \Sigma]] : c_{\varphi} = \varnothing \}$ is amenable in order to use this cocycle in Proposition \ref{prop:cocycle_amen_kernel}. Note that
\begin{align} 
\ker c &= \{\varphi\in [[G\acts \Sigma]] : c_{\varphi} = \varnothing \} \nonumber\\
&= \{\varphi\in [[G\acts \Sigma]] : Y\triangle \varphi(Y) = \varnothing \} \nonumber\\
&= \{\varphi\in [[G\acts \Sigma]] : \varphi(Y)=Y \} \nonumber\\
&= [[G\acts\Sigma]]_Y. \label{eq:kernel_stab}
\end{align}
Hence, the kernel of $c$ is exactly the stabilizer of the set $Y$ in the topological full group $[[G\acts \Sigma]]$. In the next sections we prove that this stabilizer is amenable.
\end{remark}

\subsection{Ubiquitous patterns in the action}

\begin{definition}
Let $G$ be a group acting on the space $\Sigma$. Let $D\subset G$ be a finite set containing the identity element. For an element $\varphi\in \mathrm{PW}(G\acts \Sigma)$ and for two points $q_1,q_2\in \Sigma$, we say that the \emph{$\varphi$-action is the same on the $D$-neighborhood of $q_1$ and $q_2$}, if the $D$-neighborhoods of $q_1$ and $q_2$ are isomorphic, and for every $d\in D$, $\varphi$ acts by the same element of $G$ on $d\cdot q_1$ and on $d\cdot q_2$, i.e., there exists $g\in G$ such that $\varphi(d\cdot q_1)=gd\cdot q_1$ and $\varphi(d\cdot q_2)=gd\cdot q_2$.
\end{definition}

\begin{lemma}\label{le:upp}
Let $G=\langle S\rangle$ be a group acting minimally on the compact space $\Sigma$ with a finite symmetric generating set $S$, and take an arbitrary point $q\in X$.

For every finite subset $F\subset [[G\acts \Sigma]]$ and every $n\in \N$, there exists $r=r(q, F, n)\in\N$ so that for every $y\in X$ there exists $z\in B_r(x)$ such that for all $\varphi\in F$, the $\varphi$-action is the same on the $S^n$-neighborhood of $q$ and $z$.
\end{lemma}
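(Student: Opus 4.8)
The plan is to show that the finite amount of local data needed to decide the conclusion takes only finitely many values, and that its value at $q$ is forced to recur syndetically along $X$ by minimality. Each $\varphi\in F$ lies in $[[G\acts\Sigma]]$, hence is given by finitely many elements of $G$ on the pieces of a finite clopen partition of $\Sigma$; taking the common refinement over the finitely many $\varphi\in F$ yields a single finite clopen partition $\mathcal{Q}$ such that every $\varphi\in F$ acts as one fixed element of $G$ on each atom. Write $\chi(x)\in\mathcal{Q}$ for the atom containing $x$. For $p\in X$ define the \emph{type} of $p$ to be the isomorphism class of the ball $B_n(p)$ in the Schreier graph, equipped with basepoint $p$, the $S$-labelling of its edges, and the vertex-colouring $v\mapsto\chi(v)$. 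Since every vertex has degree at most $|S|$ the ball has boundedly many vertices, and $\mathcal{Q}$ is finite, so there are only finitely many types. If $q$ and $z$ have the same type, the conclusion of the lemma holds for $(q,z)$: any basepoint- and label-preserving isomorphism of the coloured balls automatically sends $dq$ to $dz$ whenever $\mathrm{len}(d)\le n$, because labelled walks issued from a vertex of a Schreier graph are unique and all initial segments of the walk spelling $d$ lie in the relevant ball; such an isomorphism preserves $\chi$, so $dq$ and $dz$ lie in the same atom and every $\varphi\in F$ acts on them by the same element of $G$, while the balls are isomorphic by hypothesis. Thus it suffices to find $r=r(q,F,n)$ such that every $B_r(y)$, $y\in X$, contains a point of the same type as $q$.

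Minimality enters through the standard fact that for a nonempty open $V\subseteq\Sigma$ the set $\bigcup_{g\in G}gV$ is open and $G$-invariant, hence equals $\Sigma$, and then by compactness finitely many translates $g_1V,\dots,g_kV$ cover $\Sigma$. Applied to the clopen set $V_0=\bigcap_{\mathrm{len}(g)\le n} g^{-1}\bigl(\chi^{-1}(\chi(gq))\bigr)\ni q$, this gives $L:=\max_i\mathrm{len}(g_i)$ with the property that every $y\in X$ has some $z:=g_i^{-1}y\in V_0\cap X$ with $\dd(y,z)\le L$; this $z$ satisfies $\chi(gz)=\chi(gq)$ for all $g$ with $\mathrm{len}(g)\le n$, which already secures the ``$\varphi$ acts by the same element'' half of the conclusion for every $\varphi\in F$.

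The main obstacle is to arrange in addition that $B_n(z)$ be isomorphic, as an $S$-labelled graph, to $B_n(q)$ — equivalently, that the coincidence pattern of $g\mapsto gz$ on the radius-$(n+1)$ ball of $G$ agree with that of $q$. This is a repetitivity statement for the Schreier graph of a minimal action, and the delicacy is that the locus $\{x\in\Sigma: gx=hx\}$ is closed but not in general open, so the set of $x$ realizing a prescribed coincidence pattern is only locally closed and cannot be fed directly into the covering argument above. I would attack this by first covering $\Sigma$ by translates of the \emph{open} set $V_0\cap\{x: gx\neq hx\text{ whenever }gq\neq hq,\ \mathrm{len}(g),\mathrm{len}(h)\le n+1\}$, which produces a syndetic supply of points $z$ near $y$ whose coloured ball is at least as ``unfolded'' as $B_n(q)$, and then, using the finiteness of types together with a further appeal to minimality (or a compactness argument: were the type of $q$ not syndetic, extract $y_k$ with $B_k(y_k)$ free of that type, pass to a limit point $\xi$, and contradict $q\in\overline{Gq}=\Sigma$ by transporting the local structure back near $q$ along its own orbit), upgrade to the exact type of $q$. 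I expect this last step to carry most of the technical weight; the line-like hypothesis (Propositions~\ref{prop:bi-inf} and~\ref{prop:m_geod}), which confines every ball to a bounded window along a bi-infinite geodesic and thereby bounds the number of possible ball types, should be what makes the recurrence rate effective.
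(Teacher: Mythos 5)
Your first two paragraphs reproduce the paper's proof almost exactly: the paper also takes a common refinement $\mathcal{P}$ of the partitions associated to the finitely many $\varphi\in F$, chooses an open neighborhood $V\ni q$ each of whose translates $g\cdot V$, $g\in S^n$, lies in a single atom of $\mathcal{P}$, covers $\Sigma$ by finitely many translates $g\cdot V$ with $g\in S^r$ using minimality and compactness, and for $y=h\cdot q$ takes $z=\hat g^{-1}h\cdot q\in V\cap B_r(y)$. Your $V_0$ is this $V$ (minus a disjointness condition, see below) and your $L$ is the paper's $r$. Up to that point your argument is complete and correct. One aside in your last paragraph is off: no line-like geometry enters here at all — the proof uses only minimality and compactness, and Propositions~\ref{prop:bi-inf} and~\ref{prop:m_geod} play no role in this lemma.

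The gap is the step you flag and then defer: forcing $B_n(z)$ to have the exact labelled, based isomorphism type of $B_n(q)$. You are right that the condition $gz\neq g'z$ for those pairs with $gq\neq g'q$ is open and can be intersected into $V_0$ (this is what the paper's ``pairwise disjoint'' requirement on the $g\cdot V$ is meant to achieve), but the reverse implication $gq=g'q\Rightarrow gz=g'z$ is a closed, generally non-open condition, and neither your covering argument nor your compactness sketch will produce it; indeed the fixed-point locus of a generator is closed with possibly empty interior, so minimality gives no syndetic recurrence of the \emph{exact} type, and the ``upgrade'' step you propose cannot work in general. The correct repair is to weaken rather than upgrade: the open condition alone makes $gz\mapsto gq$ a well-defined surjection $S^nz\to S^nq$ compatible with the action of every $\varphi\in F$, and that is all the downstream application in Lemma~\ref{le:stab} needs (pull $Y\cap B_n(q)$ and $Y^c\cap B_n(q)$ back along this surjection instead of pushing them forward along a bijection). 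To be fair, the paper's own proof is no more careful — it never verifies the ``isomorphic neighborhoods'' clause, and its disjointness requirement cannot even be met literally when $B_n(q)$ has identifications — so you have proved exactly as much as the paper does and have correctly isolated the weak point; but as a self-contained proof of the statement under the strong reading of ``isomorphic'', your proposal is incomplete at precisely the step you leave open.
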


\begin{proof}
Let us fix the elements $\varphi_1,\dots, \varphi_k \in [[G\acts \sigma]]$ and a number $n\in \N$.

Choose a finite partition $\mathcal{P}$ of $\Sigma$ such that every $\varphi_i$ is acting with one element of $G$ when restricted to any element of $\mathcal{P}$. Then there exists an open neighborhood $V$ of $q$ such that the sets $g\cdot V$ for $g\in S^n$ are pairwise disjoint, and every $g \cdot V$ is contained in some element of $\mathcal{P}$. Since $V$ is open and non-empty, the union 
\[\bigcup_{g\in G} g\cdot V=\bigcup_{j\geq 1} \bigcup_{g\in S^{j}} g\cdot V\]
is non-empty, open and $G$-invariant, so by minimality we have
\[\bigcup_{j\geq 1} \bigcup_{g\in S^{j}} g\cdot V = \Sigma.\]
Due to the compactness of $\Sigma$, already a finite union must cover it, so there exists $j\in \N$ such that 
\[\bigcup_{g\in S^{j}} g\cdot V=\Sigma.\]
Let $r=j$. Now let $y\in X=G\cdot q$ be an arbitrary point. Then $y=h\cdot q$ for some $h\in G$. We have
\[ \Sigma=h^{-1}\cdot \Sigma= \bigcup_{g\in S^r}h^{-1}g\cdot V,\]
so there exists $\hat{g}\in S^r$ such that $q\in h^{-1}\hat{g}\cdot V$. This means that $\hat{g}^{-1}h\cdot q\in V$. Let $z=\hat{g}^{-1}h\cdot q$, and note that $z=\hat{g}^{-1}h\cdot q\in B_r (h\cdot q)=B_r(y)$. On the other hand, $q$ and $z$ are both in $V$, so for every $g\in S^n$, the points $g\cdot q$ and $g \cdot z$ are in the same element of the partition $\mathcal{P}$, so every $\varphi_i$ acts with the same element of $G$ on them. Therefore, for all $i=1,\dots,k$, the $\varphi_i$-action is the same on the $S^n$-neighborhood of $q$ and $z$.

This proves the statement of the lemma for $r$.
\end{proof}

\begin{lemma}\label{le:d_phi}
For every piecewise map $\varphi \in \mathrm{PW}(G \acts X)$, there exists a number $d_{\varphi} \in \N$, such that for every $x\in X$, $\dd(x, \varphi(x)) \leq d_{\varphi}$.
\end{lemma}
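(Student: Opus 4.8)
\textbf{Proof proposal for Lemma \ref{le:d_phi}.}

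The plan is to extract the finite set of group elements defining $\varphi$ and bound distances uniformly by the maximal length of these elements. By the definition of the piecewise group $\mathrm{PW}(G\acts X)$, there exists a finite subset $T\subseteq G$ such that for every $x\in X$ we have $\varphi(x)\in T\cdot x$; that is, for each $x$ there is some $t\in T$ with $\varphi(x)=t\cdot x$. Set
\[ d_{\varphi} = \max_{t\in T} \mathrm{len}(t), \]
which is a well-defined natural number since $T$ is finite. For an arbitrary $x\in X$, pick $t\in T$ with $\varphi(x)=t\cdot x$. Writing $t=s_1s_2\cdots s_k$ with $s_1,\dots,s_k\in S$ and $k=\mathrm{len}(t)\leq d_{\varphi}$, we see that $x$ and $t\cdot x$ are joined by a path of length $k$ in the Schreier graph $\mathrm{Sch}(G,S,X)$, namely $x, s_k\cdot x, s_{k-1}s_k\cdot x, \dots, s_1\cdots s_k\cdot x$. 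Hence $\dd(x,\varphi(x))\leq k\leq d_{\varphi}$, as required.

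I expect this lemma to be entirely routine: the only content is that membership in $\mathrm{PW}(G\acts X)$ gives a \emph{finite} set $T$, and that the graph distance in a Schreier graph is bounded above by word length (this is precisely the observation already used in the proof of Lemma \ref{le:cocycle} that $\dd(x,gx)\leq\mathrm{len}(g)$). There is no real obstacle here; the statement simply isolates, for later use, the fact that a piecewise map moves points a uniformly bounded distance in the orbit graph.
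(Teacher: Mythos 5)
Your proof is correct and follows exactly the paper's argument: extract the finite set $T\subseteq G$ witnessing that $\varphi$ is a piecewise $G$ map and set $d_{\varphi}=\max_{t\in T}\mathrm{len}(t)$. You merely spell out the standard fact that Schreier-graph distance is bounded by word length, which the paper leaves implicit.
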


\begin{proof}
There exists a finite set $T\subseteq G$ such that for every $x\in X$, we have $\varphi(x)\in T\cdot x$. The statement of the lemma holds for $d_{\varphi} = max\{\mathrm{len}(t)\ : \ t\in T\}$.
\end{proof}

\begin{definition}\label{def:N_phi}
Let $m\in \N$ be the constant from Proposition \ref{prop:m_geod}. Let us fix a bi-infinite geodesic $\ell$ in $X$ (it exists by Lemma \ref{le:bi-inf}), and a point $p\in \ell$.

Let us denote the two ends of $\ell$ by $+\infty$ and $-\infty$. For a set $A\subseteq X$ we will say that $+\infty\in A$, if there exists a point $x\in \ell$ such that $[x, +\infty] \subseteq A$, where $[x,+\infty]\subseteq \ell$ denotes the half-line from $x$ towards $+\infty$. Similarly, $-\infty\in A$ if there exists $x\in \ell$ such that $[x, -\infty]\subseteq A$.

Let $R\in\N$ be such that the $R$-ball around the point $p$ contains both $\partial Y$ and $\partial Y^c$ (such a radius exists since $\partial Y$ is finite by Lemma \ref{le:boundY}, and hence $\partial Y^c$ is also finite).

For a piecewise map $\varphi \in \mathrm{PW}(G \acts X)$ let us define the number 
\[N_{\varphi} = 6m + R + 2d_{\varphi}.\]
\end{definition}

\begin{lemma}\label{le:oneend}
If a set $A\subseteq X$ and its complement $A^c$ are both infinite, but its boundary $\partial A$ is finite, then it contains exactly one end of $\ell$.
\end{lemma}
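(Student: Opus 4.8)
The plan is to use the same ``$f$-image cannot jump'' principle that drove the proofs of Propositions \ref{prop:bi-inf} and \ref{prop:m_geod}, combined with the finiteness of $\partial A$. First I would observe that since $\partial A$ is finite, there is a radius $\rho$ such that $\partial A \subseteq B_\rho(p)$; hence outside the finite segment $I = B_\rho(p)\cap \ell$, the bi-infinite geodesic $\ell$ lies entirely in $A$ or entirely in $A^c$ on each of its two ends. (Indeed, if some vertex $x\in\ell\setminus I$ lay in $A$ and a neighbour of it on $\ell$ lay in $A^c$, that vertex would be in $\partial A\subseteq B_\rho(p)\subseteq I$, a contradiction; so on each connected component of $\ell\setminus I$ — i.e.\ on each half-line towards $+\infty$ and towards $-\infty$ — membership in $A$ is constant.) This already shows that each of the two ends $+\infty,-\infty$ is either in $A$ or in $A^c$ in the sense of Definition \ref{def:N_phi}, and that $\ell$ contains \emph{at most one} end in $A$ only if the two half-lines are not both in $A$.

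Next I would show that $\ell$ contains \emph{at least one} end of $\ell$ in $A$, i.e.\ at least one of the two half-lines of $\ell\setminus I$ lies in $A$. Suppose not: then both half-lines of $\ell$ lie in $A^c$, so $A\cap\ell\subseteq I$ is finite, meaning $A$ is contained in a bounded neighbourhood of $\ell$... but that is not immediate, so instead I would argue via the quasi-isometry $f$. Since $A$ is infinite and $X$ is locally finite, by Lemma \ref{le:localfin} the set $f(A)\subseteq\Z$ is infinite (each fibre $f^{-1}(n)$ is finite, so infinitely many $n$ meet $A$); likewise $f(A^c)$ is infinite. If both ends of $\ell$ were in $A^c$, then $f(\ell\setminus I)$ would be contained in $f(A^c)$, but more usefully: pick $a_k\in A$ with $|f(a_k)|\to\infty$. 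By Proposition \ref{prop:m_geod}, each $a_k$ is within distance $m$ of some point $\ell(a_k)\in\ell$, and by Definition \ref{def:qi} the values $|f(\ell(a_k))|\to\infty$ too, so $\ell(a_k)$ eventually leaves $I$ — say it lies on the $+\infty$ half-line for infinitely many $k$ (pass to a subsequence). Now walk in $X$ from $a_k$ to $\ell(a_k)$ along a path of length $\le m$: since $|f(u)-f(v)|\le\alpha+\beta$ for neighbours $u,v$, and since for large $k$ the endpoint $\ell(a_k)$ is ``deep'' on the half-line while $a_k\in A$, I would compare this with the fact that the entire $+\infty$ half-line sits in $A^c$; this forces such a path to cross $\partial A$ (a transition from $A$ to $A^c$ occurs somewhere on it), hence a boundary point lies within distance $m$ of $a_k$, contradicting $|f(a_k)|\to\infty$ versus $\partial A$ finite (so $f(\partial A)$ bounded), via Definition \ref{def:qi}. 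Symmetrically we cannot have both ends in $A$. Combining with the first paragraph, exactly one end of $\ell$ lies in $A$.

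The main obstacle I anticipate is the ``at least one end'' direction: making precise why an infinite set $A$ with finite boundary, inside a graph quasi-isometric to $\Z$, must actually reach out to one of the two ends of $\ell$ rather than merely accumulating near $\ell$ in a bounded region. The clean way to close this is the crossing argument just sketched — any path from a point of $A$ to a point of $A^c$ must pass through $\partial A$, so if $a\in A$ has a nearby geodesic-point $\ell(a)$ that is forced into $A^c$ (because we assumed the whole relevant half-line is in $A^c$), then $\partial A$ has a point within distance $m$ of $a$; since $\partial A$ is finite its $f$-image is bounded, so $f(a)$ is bounded, contradicting that $A$ is infinite (unbounded in $f$). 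One bookkeeping subtlety: to know $\ell(a)$ lands on a \emph{specific} half-line I pass to a subsequence, which is harmless since we only need a contradiction. Everything else is a routine application of the constants $\alpha,\beta,\gamma,m$ already in play.
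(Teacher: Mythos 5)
Your proposal is correct and follows essentially the same route as the paper: beyond the finite boundary the two half-lines of $\ell$ have constant membership, and if neither end lay in $A$ then any far-away point of $A$ would be joined to $\ell\subseteq A^c$ by a path of length at most $m$ (Proposition \ref{prop:m_geod}) forced to cross $\partial A$, which is impossible. The only cosmetic difference is that you express the final contradiction via the quasi-isometry $f$ ($f(A)$ bounded versus $A$ infinite with finite fibres), whereas the paper stays in graph distance and concludes directly that $A\subseteq B_{r+m}(x)$ is finite.
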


\begin{proof}
It is enough to prove that if $A$ is infinite and $\partial A$ is finite, then it contains at least one end of $\ell$. Indeed, we can apply this statement to both $A$ and $A^c$ -- since $\partial A^c$ is also finite -- to prove the statement of the lemma.

Suppose for contradiction that $A\subseteq X$ is an infinite set with finite boundary such that $+\infty, -\infty \notin A$. Since its boundary is finite, there exists a ball $B_r(x)$ with finite radius such that $\partial A\subseteq B_r(x)$. By the definition of the boundary, a connected component of the set $\ell\setminus B_r(x)$ must entirely belong either to $A$ or to $A^c$. Since $B_r(x)$ is finite, there exists a connected component of $\ell\setminus B_r(x)$ containing $+\infty$, and there is one (possibly the same) containing $-\infty$. Since we assumed that $+\infty, -\infty\notin A$, we have $+\infty, -\infty \in A^c$.

Now consider an arbitrary point $y\in X\setminus B_{r+m}(x)$, where $m$ is the constant from Proposition \ref{prop:m_geod}. By Proposition \ref{prop:m_geod}, there exists a point $\hat{y}\in \ell$ such that $\dd(y, \hat{y})\leq m$. Since $\hat{y}$ is connected to either $+\infty$ or $-\infty$ outside of $B_r(x)$ (and $\partial A\subseteq B_r(x)$), we must have $\hat{y}\in A^c$. We have $y\in X\setminus B_r(x)$, but we can say even more: there is a path of length at most $m$ connecting $y$ and $\hat{y}$ that lies outside of the ball $B_r(x)$. Since $y$ is connected to $\hat{y}$ outside of $B_r(x)$, it must also belong to $A^c$. 

Therefore, $X\setminus B_{r+m}(x)\subseteq A^c$, and hence $A\subseteq B_{r+m}(x)$. This contradicts the assumption that $A$ is infinite, so we must have $+\infty\in A$ or $-\infty\in A$.
\end{proof}

\begin{corollary}\label{cor:oneend}
The set $Y$ contains exactly one end of $\ell$, we can assume that $+\infty \in Y$, but $-\infty\notin Y$.
\end{corollary}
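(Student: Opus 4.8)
The plan is to read off the corollary directly from Lemma~\ref{le:oneend}, whose hypotheses we verify for the set $A = Y$. Lemma~\ref{le:boundY} already gives that $Y$ is infinite and that $\partial Y$ is finite, so the only remaining point is that the complement $Y^c = X\setminus Y$ is infinite.

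To establish this, recall $Y = f^{-1}(\N)$, so $Y^c = f^{-1}(\Z\setminus\N)$, which contains $f^{-1}(n)$ for every negative integer $n$. By the second requirement of Definition~\ref{def:qi}, the preimage $f^{-1}(I)$ is non-empty for every interval $I$ of length at least $2\gamma$; since the negative integers contain infinitely many pairwise disjoint intervals of length $2\gamma$, the set $Y^c$ is infinite. This is the same argument used for $Y$ itself in the proof of Lemma~\ref{le:boundY}, and it is insensitive to whether one takes $0\in\N$ or not, since only negative integers are needed as witnesses.

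With $Y$ and $Y^c$ both infinite and $\partial Y$ finite, Lemma~\ref{le:oneend} applies to $A = Y$ and shows that $Y$ contains exactly one end of $\ell$. Finally, the labels $+\infty$ and $-\infty$ on the two ends of $\ell$ were assigned by an arbitrary choice in Definition~\ref{def:N_phi}, so we may simply name the end that lies in $Y$ to be $+\infty$; then $+\infty\in Y$ and $-\infty\notin Y$, as claimed. There is no genuine obstacle here: the statement is a corollary in the literal sense, a bookkeeping consequence of Lemma~\ref{le:oneend} together with the infinitude of $Y^c$.
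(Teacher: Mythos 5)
Your proposal is correct and follows exactly the route the paper intends: the corollary is stated without proof precisely because it is Lemma~\ref{le:oneend} applied to $A=Y$, with the hypotheses supplied by Lemma~\ref{le:boundY} and the (easily supplied) infinitude of $Y^c$, followed by relabeling the ends of $\ell$. Your verification that $Y^c$ is infinite, by running the interval argument from Lemma~\ref{le:boundY} on the negative integers, is the one detail the paper leaves implicit, and you handle it correctly.
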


\begin{lemma}\label{le:stab}
Let us fix a finite subset $F \subseteq [[G \acts \Sigma]]_Y$ of the stabilizer of $Y$ and a number $n>\max\{N_{\varphi} : \varphi\in F\}$. Assume that there is a point $z\in X$ such that the $\varphi$-action is the same on the $S^n$-neighborhood of $p$ and of $z$ for every $\varphi \in F$. Then there exists a set $Y_z\subseteq  X$, such that $\partial Y_z\subseteq B_R(z)$, $+\infty\in Y_z$, $-\infty \in Y_z^c$ and $F \subseteq [[G\acts \Sigma]]_{Y_z}$.
\end{lemma}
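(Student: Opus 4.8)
The idea is to transport the set $Y$, which is ``pinned down'' near $p$ (its boundary lies in $B_R(p)$ by the choice of $R$), over to a set $Y_z$ pinned down near $z$, using the hypothesis that the $\varphi$-action agrees on the $S^n$-neighborhoods of $p$ and of $z$. Since $n>N_\varphi=6m+R+2d_\varphi$ for every $\varphi\in F$, the local picture of the action around $p$ and around $z$ coincides on a ball large enough to control everything relevant to the $\varphi$'s: the boundary of $Y$ (radius $R$), the displacement under each $\varphi$ (radius $d_\varphi$), and enough extra room (the $6m$ term) to use Proposition \ref{prop:m_geod} to talk about which end of $\ell$ a region belongs to.

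First I would make precise the transport map. Fix $g\in G$ with $g\cdot p=z$ coming (roughly) from the isomorphism of $S^n$-neighborhoods; more carefully, since the $S^n$-neighborhoods of $p$ and $z$ are isomorphic as rooted labelled graphs, there is a graph isomorphism $\psi\colon B_n(p)\to B_n(z)$ with $\psi(p)=z$ that intertwines the $S$-labelling, hence intertwines the partial action of $G$ (words of length $\le n$) up to that radius. Define $Y_z$ on $B_{n-R-2d_\varphi}(z)$ to be $\psi(Y\cap B_{n-R-2d_\varphi}(p))$, and outside that ball declare $Y_z$ to contain $+\infty$ and exclude $-\infty$ along $\ell$; concretely, set $Y_z = \psi(Y\cap B_{s}(p)) \cup ([w,+\infty])$ for an appropriate cut radius $s$ and a point $w\in\ell$ past $B_s(z)$, with the far side put into $Y_z^c$. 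The reason this is well defined and has $\partial Y_z\subseteq B_R(z)$: inside $B_s(z)$, $Y_z$ looks exactly like $Y$ looks inside $B_s(p)$ (by $\psi$), and $\partial Y\subseteq B_R(p)$; outside, $Y_z$ is a tail of one end of $\ell$, which has empty boundary except possibly at the single cut point — and by making the cut in a region where, by Proposition \ref{prop:m_geod}, $X$ is $m$-close to $\ell$ and $\ell\setminus B_s(z)$ has exactly two components (one per end), one checks the cut contributes nothing to $\partial Y_z$ beyond $B_R(z)$. Here the $6m$ slack in $N_\varphi$ is what guarantees there is room to choose such a clean cut inside $B_n(z)$.

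Next, $+\infty\in Y_z$ and $-\infty\in Y_z^c$: this holds essentially by construction of the tail, but one should check consistency, i.e.\ that the piece of $\ell$ near $z$ coming from $\psi(Y\cap B_s(p))$ connects up to the chosen tail on the correct side. This is where Corollary \ref{cor:oneend} enters — $Y$ contains exactly one end of $\ell$ through $p$, so the ``inside vs.\ outside'' pattern of $Y$ along $\ell$ near $p$ is the expected one, and $\psi$ carries it to the expected pattern near $z$; Proposition \ref{prop:m_geod} lets us detect, using only the $S^n$-picture, which local component of $\ell\setminus B_s(z)$ flows out to $+\infty$. Finally, $F\subseteq[[G\acts\Sigma]]_{Y_z}$, i.e.\ $\varphi(Y_z)=Y_z$ for all $\varphi\in F$. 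Split $X$ into $B_{s-d_\varphi}(z)$, the annulus near radius $s$, and the exterior. On $B_{s-d_\varphi}(z)$: by the agreement hypothesis $\varphi$ acts by the same $G$-element as it does on the corresponding point near $p$, and there $\varphi(Y)=Y$ since $\varphi\in[[G\acts\Sigma]]_Y$; transport by $\psi$ (valid since $d_\varphi$-displacements stay inside $B_n(z)$) gives $\varphi(Y_z)\cap B_{s-d_\varphi}(z)=Y_z\cap B_{s-d_\varphi}(z)$. On the far exterior: $Y_z$ is a tail of $\ell$ together with its $m$-neighborhood structure governed by $\partial Y_z\subseteq B_R(z)$, and $\varphi$ moves points by at most $d_\varphi\ll$ the buffer, so it preserves ``far inside'' and ``far outside''; the only work is the annular region, which the $6m+2d_\varphi$ buffer in $N_\varphi$ is exactly designed to absorb. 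I expect this last verification — that $\varphi$ preserves $Y_z$ in the transition annulus, reconciling the $\psi$-transported data with the ``tail of $\ell$'' data — to be the main obstacle, and the numerology $N_\varphi=6m+R+2d_\varphi$ is presumably calibrated precisely to make it go through.
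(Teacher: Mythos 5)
Your overall strategy --- transport the local picture of $Y$ near $p$ over to $z$ via the isomorphism of $S^n$-neighborhoods, then extend to all of $X$ using the two ends of $\ell$ --- is in the same spirit as the paper's, but your explicit formula for $Y_z$ does not work, and the step you dismiss with ``one checks the cut contributes nothing to $\partial Y_z$ beyond $B_R(z)$'' is precisely where the content of the lemma lies. Concretely: you set $Y_z=\psi(Y\cap B_s(p))\cup[w,+\infty]$, so outside the cut ball the set $Y_z$ consists only of a half-line of $\ell$. But $X$ is merely quasi-isometric to a line; Proposition \ref{prop:m_geod} says every vertex is within $m$ of $\ell$, not that $X$ coincides with $\ell$ far from $z$. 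Hence any vertex of $[w,+\infty]$ with a neighbour off $\ell$ (and off the transported piece) is a boundary point of $Y_z$, and such vertices occur arbitrarily far from $z$, so $\partial Y_z\subseteq B_R(z)$ fails. The repair is to define $Y_z$ by connectivity rather than by a formula: put $B^{+}=h(B_n(p)\cap Y)$, $B^{-}=h(B_n(p)\cap Y^c)$, and let $A^{+}$ (resp.\ $A^{-}$) be the set of points joinable to $B^{+}$ (resp.\ $B^{-}$) by a path avoiding the other set; this is what the paper does, and it makes $\partial Y_z=h(\partial Y)\subseteq B_R(z)$ automatic.

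Once the definition is set up this way (or once you try to salvage yours), the missing idea surfaces: you must prove that the two sides are genuinely separated, i.e.\ that no path lying outside $B_n(z)$ connects a point of $h(Y\cap B_n(p))$ to a point of $h(Y^c\cap B_n(p))$ (equivalently, $A^{+}\cap A^{-}=\varnothing$, so that $A^{-}=(A^{+})^c$). Your sketch implicitly treats the exterior of the ball as if it were $\ell$ itself, where the two sides are obviously disconnected; a priori a path in $X$ could wander around the ball and reconnect them. Ruling this out is the heart of the paper's proof: one projects the hypothetical path to $\ell$, and either two consecutive projections must jump over $\hat{z}$ by more than $2m+1$ (impossible, since adjacent vertices have projections at distance at most $2m+1$), or the endpoints project to the same side of $\hat{z}$, in which case one constructs a path of length at most $12m$ between them inside $B_n(z)$ staying at distance more than $R+2d_\varphi$ from $z$, whose $h$-preimage would then have to cross $\partial Y\subseteq B_R(p)$ --- a contradiction. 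This is exactly what the $6m$ in $N_\varphi=6m+R+2d_\varphi$ is calibrated for. Your proposal never confronts this separation step (nor the related verification that $A^{\pm}$ are both infinite, needed to invoke Lemma \ref{le:oneend} and decide which set contains $+\infty$), so as written it has a genuine gap.
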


\begin{proof}
Since the $\varphi$-action is the same on the $S^n$-neighborhood of $p$ and $z$ for every $\varphi\in F$, there exists a bijection 
\[h\colon S^n p \longrightarrow  S^n z,\]
such that for every $x\in S^n p=B_n(p)$, $\varphi$ acts by the same group element on the points $x$ and $h(x)$. Let us define $B^+ = h(S^np \cap Y)$ and $B^- = h(S^np \cap Y^c)$, and let
\begin{align*}
A^+ &= \{ x\in X : \text{there exists a path from } x \text{ to } B^+ \text{ that does not intersect } B^-\},\\
A^- &= \{ x\in X : \text{there exists a path from } x \text{ to } B^- \text{ that does not intersect } B^+\}.
\end{align*}
We will show that setting $Y_z = A^+$ or $Y_z=A^-$ satisfies the statement of the lemma.

\begin{claim} 
We have $A^- = (A^+)^c$. 
\end{claim}

\begin{proof}
Since $X$ is connected, every $x\in X$ is connected to some point of $B^+\cup B^- = S^nz=B_n(z)$, and hence $A^+ \cup A^- = X$. Therefore, we have to prove $A^+ \cap A^- = \varnothing$.

Suppose for contradiction that there exists a point that can be connected to both $B^+$ and $B^-$ without intersecting the other. This means that we can find points $z_+\in B^+$ and $z_-\in B^-$ that are connected by a path outside of $B_n(z)$, i.e., there exists a path $z_+=x_0, x_1, x_2, \dots, x_{k-1}, x_k=z_-$, such that $x_i \in X\setminus B_n(z)$ for $i= 1, \dots, k-1$.

For a point $x\in X$, we will denote its `projection' to $\ell$ by $\hat{x}$, i.e., the closest point to $x$ on $\ell$. If there are several such points, let us choose the closest one to the end $-\infty$. By Proposition \ref{prop:m_geod}, for every $x\in X$, we have $\dd(x, \ell)=\dd(x, \hat{x}) \leq m$.

We know that $\dd(z, x_i) \geq n >N_{\varphi}$ for $i=0,1,\dots, k$. By the triangle inequality, we have
\begin{align}
4m+R+2d_{\varphi} = N_{\varphi}-2m < n- 2m & \leq \dd(\hat{z}, \hat{x}_i) \leq n+2m. \label{eq:stablemma_triangle}
\end{align}
The two projections $\hat{z}_+$ and $\hat{z}_-$ are either separated by $\hat{z}$ on $\ell$ or they are on the same side of it. In both cases, we get a contradiction:

\begin{enumerate}[label={\arabic*.}]
\item Suppose that $\hat{z}_+$ and $\hat{z}_-$ are separated by $\hat{z}$ on $\ell$. Consider the points $x_i$ of the path connecting $z_+$ and $z_-$, and their projections $\hat{x}_i$. Since $\hat{z}_+=\hat{x}_0$ and $\hat{z}_-=\hat{x}_k$ are separated by $\hat{z}$, there exists $i$ such that $\hat{x}_i$ and $\hat{x}_{i+1}$ are also separated by $\hat{z}$ on $\ell$. For this $i$, we must have
\[ 2(4m+R+2d_{\varphi})\leq \dd(\hat{x}_i, \hat{x}_{i+1})\]
by (\ref{eq:stablemma_triangle}). On the other hand, $\dd(\hat{x}_i, x_i)\leq m$, and $\dd(x_{i+1},\hat{x}_{i+1}) \leq m$, and hence
\[ \dd(\hat{x}_i, \hat{x}_{i+1})\leq 2m+1,\]
this gives a contradiction.

\item Suppose that $\hat{z}_+$ and $\hat{z}_-$ are on the same side of $\hat{z}$ on $\ell$. Our goal is to find a path of length at most $12m$ connecting $z_+$ and $z_-$ that lies in the ball $B_n(z)$. 

Since $\dd(z_+, z)=n$, we can choose $y_+$ such that $\dd(z_+, y_+) = 3m$ and $\dd(y_+, z)= n-3m$. Let $[z_+, y_+]$ denote a shortest path between these two points in the graph $X$. Clearly this path lies in $B_n(z)$. We define $y_-$ similarly for $z_-$. Consider the projection $\hat{y}_+$. First, note that $\dd(y_+, \hat{y}_+)\leq m$ and $\dd(y_+, z)=n-3m$, and hence $[y_+, \hat{y}_+]\subseteq B_n(z)$. By the triangle inequality, we have $\dd(\hat{z}_+, \hat{y}_+)\leq 5m$, so $\hat{y}_+$ cannot be separated from $\hat{z}_+$ by $\hat{z}$ on $\ell$. Similarly, the point $\hat{y}_-$ is also on the same side of $\hat{z}$ and $[y_-, \hat{y}_-]\subseteq B_n(z)$.

Again by the triangle inequality, we have that
\begin{align*}
n-5m & \leq \dd(\hat{y}_+, \hat{z}) \leq n-m,\\
n-5m & \leq \dd(\hat{y}_-, \hat{z}) \leq n-m.
\end{align*}
Since $\hat{y}_+$ and $\hat{y}_-$ are not separated by $\hat{z}$, we must have $\dd(\hat{y}_+, \hat{y}_-) \leq 4m$, and a shortest path connecting them lies on the geodesic $\ell$, so it is contained in $B_n(z)$.

Now look at the path 
\[ P= [z_+, y_+]\cup [y_+, \hat{y}_+] \cup [\hat{y}_+, \hat{y}_-] \cup [\hat{y}_-, y_-]\cup [y_-, z_-].\]
We have seen that all sections of this path are contained in $B_n(z)$. Its length is at most $3m + m+ 4m +m +3m=12m$.

Therefore, there exists a path $P$ of length at most $12m$ connecting $z_+$ with $z_-$ that lies in $B_n(z)$. Since $\dd(z_+, z) = n > N_{\varphi} = 6m + R +2d_{\varphi}$, and also $\dd(z_-, z) > 6m +R + 2d_{\varphi}$, we have that $\dd(P, z) > R+ 2d_{\varphi}$. Consequently, taking the $h$-preimage of the path $P$, we have that $\dd(h^{-1}(P), p) > R + 2d_{\varphi}$. Since $\partial Y \subseteq B_R(p)$ (by Definition \ref{def:N_phi}), the path $h^{-1}(P)$ cannot intersect the boundary of $Y$, so it must lie entirely in $Y$ or in $Y^c$. This contradicts the assumption that $z_+\in B^+ = h(Y\cap B_n(p))$ and $z_-\in B^- = h(Y^c\cap B_n(p))$.
\end{enumerate}
This concludes the proof of the fact that $A^+ \cap A^- = \varnothing$.
\end{proof}

\begin{claim}\label{cl:stablemma_boundary}
We have $\partial A^+ = h(\partial Y)$ (and similarly $\partial A^- = h(\partial Y^c)$). 
\end{claim}

\begin{proof}
Since $\partial Y\subseteq B_R(p)$ and $R<n$, we have $h(\partial Y) \subseteq \partial A^+$. For the other direction, consider a point $x\in \partial A^+$. Then $x$ has a neighbor $y\in A^-$. There are four possibilities.
\begin{enumerate}
\item If $x,y\notin B_n(z)$, then there is a path from $x$ (going through $y$) to $B^-$ without touching $B^+$, so $x\in A^-$, this contradicts the fact that $A^+\cap A^- = \varnothing$.
\item If $x\notin B_n(z)$, $y\in B_n(z)$, then $x, y$ is a path from $x$ to $B^-$ that does not intersect $B^+$, and hence $x\in A^-$. This is again a contradiction.
\item If $x\in B_n(z)$, $y\notin B_n(z)$, then $y, x$ is a path from $y$ to $B^+$ without going through $B^-$, so $y\in A^+$, which is also a contradiction.
\item The only remaining possibility is $x,y\in B_n(z)$. In this case we have $x\in B^+$, $y\in B^-$, so $h^{-1}(x)\in Y$, $h^{-1}(y)\in Y^c$, and hence $x\in h(\partial Y)$.
\end{enumerate}

We have proved the equality $\partial A^+ = h(\partial Y)$. Similarly, one can prove that $\partial A^- = h(\partial Y^c)$.
\end{proof}

\begin{claim}\label{cl:stablemma_inv}
The sets $A^+$ and $A^-$ are both invariant under the action of $F$.
\end{claim}

\begin{proof}
It is enough to prove the $F$-invariance of $A^+$, the other statement follows from this. 

Take a point $x\in A^+$ and let us fix $\varphi\in F$. We distinguish three cases.
\begin{enumerate}
\item If $x\notin B_n(z)$, then $\dd(x, A^-)> d_{\varphi}$, since $\partial A^+\subseteq B_R(z)$ and $n>R+d_{\varphi}$. We know that the distance of $x$ and $\varphi(x)$ is at most $d_{\varphi}$, so we must have $\varphi(x)\in A^+$. Similarly, we have $\varphi^{-1}(x)\in A^+$.
\item If $x\in B_n(z)$, but $\varphi(x)\notin B_n(z)$, then we must have $\dd(x, X\setminus B_n(z)) \leq d_{\varphi}$. Hence, $\dd(x, B_R(z))> d_{\varphi}$ (since $n> R+ 2d_{\varphi}$), so we get $\dd(x, A^-)> d_{\varphi}$ again. Therefore, we have $\varphi(x)\in A^+$ and $\varphi^{-1}(x)\in A^+$.
\item If $x, \varphi(x)\in B_n(z)$, then we have  $\varphi(h^{-1}(x)) = h^{-1}(\varphi(x))$ since the $\varphi$-action is the same on the $S^n$-neighborhood of $p$ and of $z$. We know that $h^{-1}(x)\in Y$, so $\varphi(h^{-1}(x))\in Y$ by the $\varphi$-invariance of $Y$. Hence, we have $\varphi(x)\in h(Y\cap B_n(p)) \subseteq A^+$. Similarly, $\varphi^{-1}(x)\in A^+$.
\end{enumerate}
Therefore, we have $\varphi \in [[G\acts \Sigma]]_{A^+}$ and hence also $\varphi \in [[G\acts \Sigma]]_{A^-}$.
\end{proof}

\begin{claim}
$A^+$ and $A^-$ are both infinite. 
\end{claim}

\begin{proof}
Suppose for contradiction that $A^+$ is finite. This implies that all infinite components of $\ell\setminus B_R(z)$ belong to $A^-$ (the number of such components is one or two). Hence, we have that $\ell\cap (B_{n+m}(z)\setminus B_{n-2m}(z))\subseteq A^-$. Now consider any point $x\in B_n(z)\setminus B_{n-m}(z)$. There exists a projection $\hat{x}$, such that $\dd(x, \hat{x})\leq m$. Therefore, by the triangle inequality, we have 
\[\hat{x}\in \ell\cap (B_{n+m}(z)\setminus B_{n-2m}(z))\subseteq A^-.\]
Furthermore, the shortest path connecting $x$ to $\hat{x}$ lies outside of $B_R(z)$, so it cannot intersect $\partial A^+$, and hence $x\in A^-$.

We proved that $B_n(z)\setminus B_{n-m}(z)\subseteq A^-$. Therefore, $B_n(z)\setminus B_{n-m}(z)\subseteq B^-$, so $B_n(p)\setminus B_{n-m}(p)\subseteq Y^c$. Since $\partial Y^c\subseteq B_R(p)$, this implies that $X\setminus B_n(p)\subseteq Y^c$, so $Y$ is finite. This is a contradiction, hence $A^+$ is infinite. We can prove the same way that $A^-$ is also infinite.
\end{proof}

We showed that $A^-=(A^+)^c$, and that $A^+$ and $A^-$ are both infinite. Therefore, they both contain exactly one end of the geodesic $\ell$ by Lemma \ref{le:oneend}. Let us define
\[ Y_z = \begin{cases} A^+ & \text{ if } +\infty \in A^+,\\ 
A^- & \text{ if } +\infty \in A^-. \end{cases}\]
In both cases, we have $+\infty\in Y_z$, $-\infty \in Y_z^c$, $\partial Y_z\subseteq B_R(z)$ (by Claim \ref{cl:stablemma_boundary}) and $F\subseteq [[G\acts \Sigma]]_{Y_z}$ (by Claim \ref{cl:stablemma_inv}). This concludes the proof of the lemma.
\end{proof}

\subsection{Amenable kernel}

\begin{proposition}\label{prop:stab_loc_fin}
The stabilizer $[[G\acts \Sigma]]_Y$ is locally finite.
\end{proposition}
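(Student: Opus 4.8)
The plan is to show that every finite subset $F\subseteq[[G\acts\Sigma]]_Y$ generates a finite group $H=\langle F\rangle$. Since $\eps_X$ embeds $[[G\acts\Sigma]]$ into $\mathrm{Sym}(X)$, it is enough to prove that the action $H\acts X$ factors through a finite group. Throughout, set $D=\max_{\varphi\in F}d_\varphi$ (Lemma \ref{le:d_phi}).

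\emph{Step 1: the $H$-orbits in $X$ have uniformly bounded diameter.} Fix $n>\max\{N_\varphi:\varphi\in F\}$ and let $r=r(p,F,n)$ be the radius from Lemma \ref{le:upp}. Suppose $x,x'\in X$ satisfy $\dd(x,x')>K$, where $K=K(\alpha,\beta,\gamma,m,R,r)$ is a constant to be fixed. Using $f$, pick an integer $v$ lying between $f(x)$ and $f(x')$ with large margins on both sides, then $y\in X$ with $|f(y)-v|\le\gamma$; by Lemma \ref{le:upp} there is $z\in B_r(y)$ on whose $S^n$-neighbourhood every $\varphi\in F$ acts as on that of $p$, and Lemma \ref{le:stab} yields an $H$-invariant set $Y_z$ with $\partial Y_z\subseteq B_R(z)$, $+\infty\in Y_z$ and $-\infty\in Y_z^c$. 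The conditions $+\infty\in Y_z$, $-\infty\in Y_z^c$ force $B_R(z)$ to disconnect the two ends of $\ell$ (otherwise the single component of $X\setminus B_R(z)$ containing both ends, being entirely inside $Y_z$ or inside $Y_z^c$ since $\partial Y_z\subseteq B_R(z)$, would violate one of them), and every other component of $X\setminus B_R(z)$ is finite by Lemma \ref{le:oneend}. A quasi-isometry computation in the style of Proposition \ref{prop:m_geod} then shows that, for $K$ large enough, $x$ lies in the $-\infty$-component of $X\setminus B_R(z)$ and $x'$ in the $+\infty$-component, so $x\notin Y_z$ while $x'\in Y_z$; as $H$ stabilises $Y_z$, we conclude $x'\notin H\cdot x$. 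Hence $\mathrm{diam}(H\cdot x)\le K$ for all $x\in X$.

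\emph{Step 2: finitely many local patterns and rigidity.} Choose a scale $n'>K+|S|^{K}D$ and a finite partition $\mathcal P$ of $\Sigma$ refining the pieces of every $\varphi\in F$. By compactness only finitely many $F$-\emph{patterns at scale $n'$} occur, a pattern recording the labelled rooted graph $B_{n'}(x)$ together with the $\mathcal P$-piece of each of its vertices. If $x,x'$ have the same pattern, the witnessing labelled isomorphism $h\colon B_{n'}(x)\to B_{n'}(x')$ intertwines each $\varphi\in F$ on $B_{n'-D}$-neighbourhoods, hence by induction on word length intertwines every element of $H$ of $F$-length at most $|S|^{K}-1$ on $B_{n'-(|S|^{K}-1)D}$-neighbourhoods. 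By Step 1, $H\cdot x\subseteq B_K(x)$ has at most $|S|^{K}$ points and is connected under $F$, so every point of it is reached from $x$ by such a short word; therefore $h$ restricts to an isomorphism of $H$-sets $H\cdot x\to H\cdot x'$. Consequently $X$ splits into finitely many $H$-invariant pieces, on each of which $H$ acts as a disjoint union of mutually $H$-isomorphic copies of one fixed finite $H$-set. Thus $H\acts X$ factors through a finite product of symmetric groups of finite sets; this homomorphism is injective, because an element of $H$ trivial on all these model sets is trivial on $X$, hence on $\Sigma$. Therefore $H$ is finite.

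The main obstacle is that Step 1 alone does not suffice: bounded orbits only force $H$ to have finite exponent, and by the Burnside problem a finitely generated group of finite exponent need not be finite, so minimality has to be invoked a second time (Step 2) to see that only finitely many local patterns occur and that the pattern around a point pins down the $H$-orbit through it together with its $H$-action — which is why $n'$ must be chosen large relative to the bound $K$ obtained in Step 1. The remaining difficulty is purely the quasi-isometry bookkeeping in Step 1 that makes $B_R(z)$ a genuine separator of the two chosen points; this is routine given Propositions \ref{prop:bi-inf} and \ref{prop:m_geod} but requires tracking the constants carefully.
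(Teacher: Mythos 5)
Your argument is correct in outline and rests on the same two pillars as the paper's proof --- Lemma~\ref{le:upp} to find points $z$ at which every $\varphi\in F$ acts as it does at $p$, and Lemma~\ref{le:stab} to transport $Y$ to $F$-invariant sets $Y_z$ with $\partial Y_z\subseteq B_R(z)$ separating the two ends --- but it deploys them differently. The paper places the points $z_i$ along the whole geodesic $\ell$ at regular spacing, shows the resulting sets form a nested chain $\cdots\supseteq Y_i\supseteq Y_{i+1}\supseteq\cdots$, and partitions $X$ into the $F$-invariant differences $Y_i\setminus Y_{i+1}$, which have uniformly bounded cardinality; it then embeds $\langle F\rangle$ into the product of the symmetric groups on these pieces and uses that a direct product of finite groups of uniformly bounded order is locally finite. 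You instead insert a single $Y_z$ between any two points at distance greater than $K$ and use its $\langle F\rangle$-invariance to conclude they lie in different orbits, so every orbit has diameter (hence cardinality) bounded by a uniform constant; the quasi-isometry bookkeeping you defer is genuinely routine and parallels the computations in Propositions~\ref{prop:bi-inf} and~\ref{prop:m_geod}. Your Step~2 --- finitely many labelled local patterns at a scale large relative to $K$, with equality of patterns forcing an $\langle F\rangle$-equivariant identification of orbits, so that the faithful action on $X$ factors through a finite product of symmetric groups --- is also correct and gives a self-contained finish.

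One correction to your closing discussion, though: the stated motivation for Step~2 is a misconception. Bounded orbits give you far more than finite exponent: they give an embedding of $H=\langle F\rangle$ into $\prod_{O}\mathrm{Sym}(O)$ over all orbits $O$, each factor of order at most $M!$ where $M$ bounds the orbit sizes. Hence $H$ is residually (finite of order at most $M!$); a finitely generated group with this property has only finitely many normal subgroups of index at most $M!$, and their intersection is simultaneously trivial and of finite index, so $H$ is finite. No Burnside-type issue arises, and Step~1 alone already suffices via exactly the closing argument the paper applies to its pieces $Y_i\setminus Y_{i+1}$. Step~2 is not wrong, but it re-proves by hand a standard fact that the paper's own proof relies on.
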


\begin{proof}
Consider a finite set $F\subseteq [[G \acts \Sigma ]]_Y$, our goal is to prove that the subgroup $\langle F\rangle$ is also finite. Define $N_F= \max\{ N_{\varphi} : \varphi\in F\}$. 

Let $n> N_F$. Let $r=r(p, F, n)$ from Lemma \ref{le:upp} for the point $p$, the finite set $F$ and the number $n$. Let $y_0 =z_0 = p$, and pick $y_i\in \ell$ for all $i\in \Z\setminus \{0\}$ such that $\dd(y_i, y_{i+1}) = 2r +2n + 2m + 2$ and $y_i$ is closer to $-\infty$ than $y_{i+1}$ for every $i\in \Z$. Now for every $i\in \Z\setminus \{0\}$ let us use Lemma \ref{le:upp} for the point $y_i$. Thus, we get the points $z_i\in B_r(y_i)$ (for all $i\in \Z$) such that for every $\varphi\in F$, the $\varphi$-action is the same on the $S^n$-neighborhood of $p$ and $z_i$. Note that due to the choice of the $y_i$'s, the $n$-balls around the points $z_i$ are pairwise disjoint.

Let $Y_0 = Y$ and for every $i\in \Z\setminus\{0\}$ let us use Lemma \ref{le:stab} for $F$, the point $z_i$ and the number $n$. For every $i$, there exists an infinite set $Y_i = Y_{z_i}\subseteq X$, such that we have $+\infty \in Y_i$, $-\infty\in Y_i^c$, furthermore $\partial Y_i\subseteq B_R(z_i)$ and $F\subseteq [[ G\acts \Sigma]]_{Y_i}$. 

\begin{claim}
For every $i\in \Z$, the set $Y_i$ contains $Y_{i+1}$. 
\end{claim}

\begin{proof}
Let us denote by $v$ the midpoint between $y_i$ and $y_{i+1}$ on $\ell$. Note that we have $\dd(v, B_n(z_i)) \geq m+1$ and $\dd(v, B_n(z_{i+1}) )\geq m+1$ by the choice of the distance between $y_i$ and $y_{i+1}$.

Therefore, $v\in Y_i$ since $+\infty\in Y_i$ and the half line $[v, +\infty]$ does not intersect $\partial Y_i \subseteq B_n(z_i)$. Similarly, we have $v\in Y_{i+1}^c$ since $-\infty\in Y_{i+1}^c$ and $[v, -\infty]$ does not intersect $\partial Y_{i+1}^c\subseteq B_n(z_{i+1})$.

Suppose for contradiction that there exists a point $x\in Y_{i+1}\setminus Y_i$. Let $\hat{x}$ be the closest point to $x$ on $\ell$, and let $[x, \hat{x}]$ denote a shortest path between them. We have $\dd(\hat{x}, x)\leq m$ by Proposition \ref{prop:m_geod}. There are two possibilites.
\begin{enumerate}[nolistsep]
\item If $\hat{x}\in [v, +\infty]$, then the path $[x, \hat{x}]\cup [\hat{x}, v]$ does not intersect $B_n(z_i)$ since $\dd(\hat{x}, B_n(z_i))\geq m+1$. However, $x\notin Y_i$ and $v\in Y_i$, so any path between the two must intersect $\partial Y_i\subseteq B_n(z_i)$. Hence, we get a contradiction.
\item If $\hat{x}\in [-\infty, v]$, then we can use a similar argument: We have $x\in Y_{i+1}$ but $v\notin Y_{i+1}$, so any path between them must intersect $\partial Y_{i+1}\subseteq B_n(z_{i+1})$. However, the path $[x, \hat{x}]\cup [\hat{x}, v]$ does not intersect $B_n(z_{i+1})$, leading to a contradiction.
\end{enumerate}
We get a contradiction in both cases, so such a point $x$ cannot exist. This proves that $Y_{i+1}\subseteq Y_i$.
\end{proof}

\begin{claim}\label{cl:stabprop_uniformbound}
For every $i\in \Z$, the set $Y_i\setminus Y_{i+1}$ is a finite set. Moreover, there is a uniform bound on the cardinality of the sets $Y_i\setminus Y_{i+1}$.
\end{claim}

\begin{proof}
Consider the $m$-neighborhood of the segment $[y_{i-1}, y_{i+2}]\subset \ell$, denoted by $B_m([y_{i-1}, y_{i+2}])$. First of all, the size of these sets has a uniform bound, since the graph $X$ is regular, and hence the size of the $m$-neighborhood of a set of $3(2r+2n+2m+2)+1$ points is uniformly bounded.

We show that $Y_i\setminus Y_{i+1} \subseteq B_m([y_{i-1}, y_{i+2}])$. Take any point $x\in Y_i\setminus Y_{i+1}$, let $\hat{x}$ denote its projection to $\ell$. Suppose for contradiction that $\hat{x}\in [y_{i+2}, +\infty]$, then $[\hat{x}, +\infty]$ does not intersect $\partial Y_{i+1}$ since $\partial Y_{i+1}\subseteq B_n(z_{i+1})\subseteq B_{n+r}(y_{i+1})$. Hence, we have $\hat{x}\in Y_{i+1}$, and also $x\in Y_{i+1}$, since $[x, \hat{x}]$ cannot intersect $\partial Y_{i+1}$ either. This contradicts the face that $x\in Y_i\setminus Y_{i+1}$. Therefore, we must have $\hat{x}\in [-\infty, y_{i+2}]$. Similarly, one can prove that if $\hat{x}\in [-\infty, y_{i-1}]$, then $x\notin Y_i$, leading to a contradiction again. This proves that $\hat{x}\in [y_{i-1}, y_{i+2}]$. Recall that $\dd(x, \hat{x}) \leq m$, and hence $x\in B_m([y_{i-1}, y_{i+2}])$.

We proved that $Y_i\setminus Y_{i+1} \subseteq B_m([y_{i-1}, y_{i+2}])$, this shows that the set $Y_i\setminus Y_{i+1}$ is finite for every $i\in \Z$, and that there is a uniform bound on their cardinalities.
\end{proof}

Notice that due to the $F$-invariance of every $Y_i$, the sets $Y_i\setminus Y_{i+1}$ are also $F$-invariant for all $i\in \Z$. Their union is the whole graph $X$, therefore, we can embed $\langle F\rangle$ into the direct product of the finite symmetric groups on the sets $Y_i\setminus Y_{i+1}$. By Claim \ref{cl:stabprop_uniformbound}, the size of these finite symmetric groups is uniformly bounded, and hence their direct product is locally finite.

Since $\langle F\rangle$ can be embedded into a locally finite group, and is finitely generated, it must be finite. This concludes the proof of the proposition.
\end{proof}

Finally, we are ready to prove our main theorem.

\begin{proof}[Proof of Theorem \ref{thm:main}]
Consider a minimal action $G\acts \Sigma$ of the finitely generated group $G$, such that there exists an orbit $X$ which is quasi-isometric to $\Z$.

First, we show that the action $[[G\acts \Sigma]] \acts X$ is extensively amenable. The Schreier graph of the action $G\acts X$ is quasi-isometric to $\Z$, and hence it is recurrent. As a corollary of Rayleigh's monotonicity principle, all connected subgraphs of a recurrent graph are also recurrent (for a proof see \cite{LyPer16}, Chapter 2). Thus, by Proposition \ref{prop:recurrent_orbits}, the action $\mathrm{PW}(G\acts X)\acts X$ is extensively amenable. It follows easily from the definition of extensive amenability that the action of any subgroup of $\mathrm{PW}(G\acts X)$ on $X$ is also extensively amenable. Therefore, $[[G\acts \Sigma]] \acts X$ is extensively amenable, as desired.

Next, we apply Proposition \ref{prop:cocycle_amen_kernel} for $[[G\acts \Sigma]]$ and $X$, with the cocycle $c$ defined in Definition \ref{def:cocycle}. By (\ref{eq:kernel_stab}) in Remark \ref{rem:cocycle}, we have $\ker c = [[G\acts \Sigma]]_Y$. By Proposition \ref{prop:stab_loc_fin}, this stabilizer $[[G\acts \Sigma]]_Y$ is locally finite, and hence it is amenable. Therefore, the conditions of Proposition \ref{prop:cocycle_amen_kernel} are satisfied: the action $[[G\acts \Sigma]] \acts X$ is extensively amenable, and the kernel of the cocycle $c$ is amenable. This proves that the topological full group $[[G\acts \Sigma]]$ is also amenable.
\end{proof}

\bibliographystyle{plain}
\bibliography{biblio_tits2}

\end{document}